\documentclass[a4paper]{amsart}
\makeatletter
\let\cl@chapter\undefined
\makeatletter

\usepackage{amsmath}
\usepackage{amssymb}
\usepackage[maxnames=4,giveninits=true,doi=false,isbn=false,citestyle=numeric,sortcites=true]{biblatex}
\usepackage{dsfont}
\usepackage{enumerate}
\usepackage{hyperref}
\usepackage{mathrsfs}
\usepackage{mathtools}\mathtoolsset{showonlyrefs}
\usepackage[capitalize]{cleveref} 

\addbibresource{cylindrical.bib}

\makeatletter
\newcommand\definealphabetloop[3]{%
  \ifx\relax#3\expandafter\@gobble\else\expandafter\@firstofone\fi
  {\expandafter\providecommand\expandafter*\csname#1#3\endcsname{#2{#3}}%
   \definealphabetloop{#1}{#2}}%
}%
\newcommand\definealphabet[2]{%
  \definealphabetloop{#1}{#2}abcdefghijklmnopqrstuvwxyzABCDEFGHIJKLMNOPQRSTUVWXYZ\relax
}%
\definealphabet{b}{\mathbb}%
\definealphabet{c}{\mathcal}%
\definealphabet{f}{\mathfrak}%
\definealphabet{r}{\mathrm}%
\definealphabet{s}{\mathscr}%
\definealphabet{t}{\mathtt}%
\makeatother

\crefformat{footnote}{#2\footnotemark[#1]#3}

\allowdisplaybreaks

\newcounter{counter}
\counterwithin{counter}{section}
\theoremstyle{plain}
\newtheorem{theorem}[counter]{Theorem}
\newtheorem{lemma}[counter]{Lemma}
\newtheorem{corollary}[counter]{Corollary}
\theoremstyle{definition}
\newtheorem{example}[counter]{Example}
\newtheorem{definition}[counter]{Definition}
\newtheorem{remark}[counter]{Remark}

\newcommand{\Elem}{\mathcal{E}}

\newcommand{\1}{\mathds{1}}
\newcommand{\Tr}{\mathrm{Tr}}

\begin{document}

\title[Cylindrical stochastic integration]{Cylindrical stochastic integration and applications to financial term structure modeling}
\date{\today}

\author{Johannes Assefa}
\address{Department of Mathematics\\TU Dresden}
\email{johannes.assefa@tu-dresden.de}
\author{Philipp Harms}
\address{Division of Mathematical Sciences\\Nanyang Technological University, Singapore}
\email{philipp.harms@ntu.edu.sg}
\thanks{The authors thank Christa Cuchiero, Claudio Fontana, Johannes M\"uller, Thorsten Schmidt, Guillaume Szulda, and Josef Teichmann for helpful discussions. Philipp Harms gratefully acknowledges financial support by the National Research Foundation Singapore under the award NRF-NRFF13-2021-0012 and by Nanyang Technological University Singapore under the award NAP-SUG}
\subjclass[2020]{60H05 (Primary), 60H15, 91G30 (Secondary)}

\date{\today}

\begin{abstract}
We develop a novel---cylindrical---solution concept for stochastic evolution equations. Our motivation is to establish a Heath--Jarrow--Morton framework capable of analysing financial term structures with discontinuities, overcoming deep stochastic-analytic limitations posed by mild or weak solution concepts. Our cylindrical approach, which we investigate in full generality, bypasses these difficulties and nicely mirrors the structure of a large financial market.  
\end{abstract}

\maketitle

\tableofcontents

\section{Introduction}

We develop a new mathematical framework, using cylindrical measure-valued processes, for stochastic modelling of financial term structures with discontinuities, as observed in energy and interest rate markets. 

Cylindrical random variables came up in the 1960s and 1970s in the development of probability theory on Banach spaces and locally convex vector spaces \cite{schwartz1973radon}.
Their raison d'\^etre is that they are easy to construct, for instance by specifying their Fourier transform or by writing down a stochastic integral equation in weak form.
However, it can be notoriously difficult to verify that a cylindrical random variable is non-cylindrical, i.e., a random variable in the usual sense.
This is one of the main challenges in the construction of infinite-dimensional stochastic integrals, and the answer is negative beyond e.g.\@ nuclear spaces \cite{ustunel1985stochastic} or UMD Banach spaces \cite{vanneerven2007stochastic, veraar2016cylindrical}.

We circumvent these difficulties by fully embracing the cylindrical approach.
Thus, we give up on the hard problem of constructing non-cylindrical integrals and instead work with much simpler cylindrical integrals.
Specifically, given any existing integral, stochastic or not, we define a new cylindrical integral by a canonical and simple construction; see \cref{thm:cylindrical}.
In important special cases, this yields the cylindrical Brownian or L\'evy integrals of \cite{berman1983weak, metivier1980stochastic, ondrejat2005brownian, applebaum2010cylindrical}.
Importantly, bounds for the non-cylindrical integral translate into bounds for the induced cylindrical integral. 
This allows us to develop a theory of cylindrical stochastic evolution equations with suitable a-priori estimates; see \cref{thm:cylindrical_stochastic_evolution}.

From a financial perspective, cylindrical processes can be seen as models of large financial markets.
Indeed, a large financial market is simply a collection of tradeable assets, all of which are semimartingales, with no further structure imposed \cite{cuchiero2016new}.
If one assumes without loss of generality that linear combinations of tradeable assets are tradeable, then a large financial market is precisely a cylindrical semimartingale, i.e., a collection of semimartingales indexed by the elements of some linear space.

We exploit this connection to define cylindrical models for financial term structures with discontinuities.
The discontinuities are caused by jumps in the underlying at predictable times and are empirically well documented in real-world markets.
For example, interest rates are influenced by policy updates of the central bank, stock prices are influenced by earnings announcements of the company, energy prices are influenced by maintenance works in the power grid, and all of these events tend to happen at pre-scheduled times; cf. \cref{fig:jumps1,fig:jumps2}.
The immediate modeling implication is that the forward rates cannot be continuous functions of the time to maturity but must be measure-valued \cite{fontana2020term, cuchiero2022measure}.

More precisely, we develop a Heath--Jarrow--Morton (HJM) framework for measure-valued forward rates as in \cite{fontana2020term}.
Actually, the forward rates are merely cylindrically measure-valued because measure-valued stochastic convolutions are ill-defined, as explained in \cref{rem:measure_valued_integration}.
Nevertheless, bond prices and a roll-over bank account are well-defined, as shown in \cref{thm:term_structure}, and the corresponding large financial market satisfies no asymptotic free lunch with vanishing risk \cite{cuchiero2016new} if a HJM drift condition is satisfied under an equivalent probability measure.
Extensions to more general driving noise would be possible within the same framework, thanks to the general construction of the cylindrical stochastic integral.
Our modeling framework is versatile and can be applied to other financial markets, as well, including energy markets. 
In principle, it also lends itself to numerical discretization by similar methods as for non-cylindrical models \cite{harms2019weak, cox2019weak}, but we have not explored this any further.

Markovian methods constitute an interesting complementary approach for constructing measure-valued processes.
They can be used to treat non-Lipschitz coefficients as for instance in super Brownian motion \cite{etheridge2000introduction} and more general affine or polynomial measure-valued processes \cite{cuchiero2019polynomial, cuchiero2021infinite, cuchiero2021measure}.
Notably, these processes are non-cylindrical and take values in the cone of non-negative measures.
There are wide-ranging financial applications in stochastic portfolio theory \cite{cuchiero2019probability}, rough volatility modeling \cite{cuchiero2020generalized}, and recently also term structure modeling \cite{cuchiero2022measure}.
The connection to cylindrical stochastic analysis is via martingale problems with cylindrical test functionals, as explained in \cref{rem:martingale_problems}.

The structure of the paper is as follows. \cref{sec:cylindrical_integration} develops the cylindrical integral in full generality. 
\cref{sec:cylindrical_stochastic} considers the special case of stochastic integration and establishes well-posedness of cylindrical stochastic evolution equations.
\cref{sec:term_structure} develops applications to cylindrically measure-valued forward rate models.
\cref{sec:vector_measure} explains relations to cylindrical vector measures, which are of independent interest but not needed elsewhere in the paper.
\cref{sec:measurable_version} contains some auxiliary results on measurable versions of stochastic processes.

\begin{figure}[h]
\centering
\includegraphics[width=\textwidth]{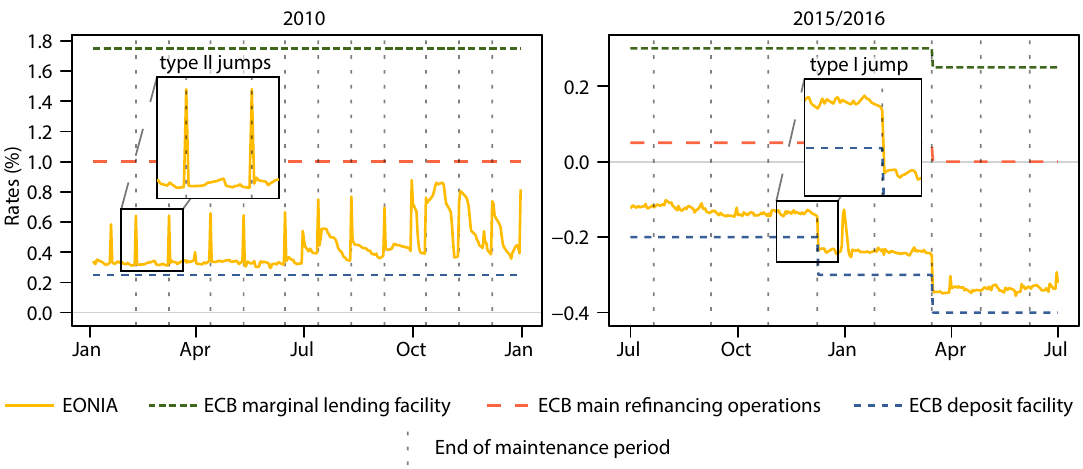}
\caption{EONIA rates jump at predictable times due to pre-scheduled monetary policy meetings of the ECB and due to regulatory constraints coming into effect at pre-scheduled dates. Figure taken from \cite{fontana2020term}.}
\label{fig:jumps1}
\end{figure}

\begin{figure}[h]
\centering
\includegraphics[width=0.8\textwidth]{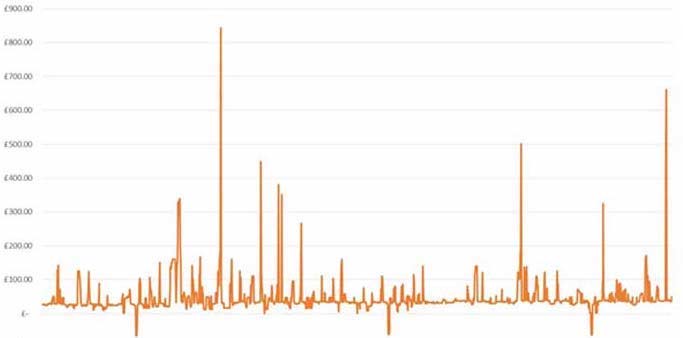}
\caption{Half hourly electricity prices in the UK jumped at predictable times in October 2016 due to pre-scheduled maintenance work on French nuclear power plants. Figure taken from \cite{myice2017what}.}
\label{fig:jumps2}
\end{figure}

\section{Cylindrical vector integration}
\label{sec:cylindrical_integration}

This section describes both abstractly and by examples how any given integral can be turned into a cylindrical integral.
The construction is canonical and simple.
Throughout this section, $E$, $F$, $G$, and $H$ are Banach spaces, and there is a continuous bilinear function $F\times G\to H$, which is denoted by juxtaposition.
Moreover, $S$ is a set, $\cA$ is an algebra of subsets of $S$, and $\Sigma$ is the sigma algebra generated by $\cA$. A finitely additive map $\mu:\cA\to G$ is called vector measure. A function $f:S\to F$ is called elementary if it is measurable and has finite range.
The set of elementary functions $f:S\to F$ is denoted by $\Elem(S,F)$. 
Limits of pointwise converging sequences of elementary functions are called strongly measurable.

\begin{definition}[Elementary integrals]
\label{def:elementary_integral}
The elementary integral of $\mu$ with respect to the bilinear form $F\times G\to H$ is the linear operator
\begin{equation}
\Elem(S,F) \ni f \mapsto \int_S f\mu:=\sum_{i=1}^n f_i\, \mu(A_i) \in H,
\end{equation}
where $n\in\bN$, $f_i \in F$, and $A_i \in \cA$ are such that $f=\sum_{i=1}^n f_i \1_{A_i}$.
The operator norm of this elementary integral, with $\Elem(S,F)$ carrying the supremum norm, is called semivariation of $\mu$ and is denoted by $\|\mu\|_{F,G,H}$.
Moreover, for any $f\in \Elem(S, F)$, the function
\begin{equation}
f\mu:\cA\ni A\mapsto \int_S \1_A f\mu \in H
\end{equation}
is finitely additive, and its semivariation with respect to the canonical bilinear form $\bR\times H \to H$ is denoted by $\|f\|_\mu:=\|f\mu\|_{\bR,H,H}$. 
\end{definition}

\begin{remark}[Notation]
The above notation is quite concise and has been chosen carefully. For greater clarity, we add some remarks.
\begin{enumerate}[(a)]
\item Integration is denoted by $\int f\mu$ instead of $\int fd\mu$. The advantage is that this results in the common notation $\int fdX$ in the special case where $\mu=dX$ is the differential of a semimartingale $X$. This is described in \cref{sec:cylindrical_stochastic}. 
\item The expression $f\mu$ implicitly uses the bilinear form $F\times G\to H$, which by convention is denoted merely by juxtaposition. Accordingly, $f\mu$ is a finitely additive function $\cA\to H$. 
\item The semivariation $\|\mu\|_{F,G,H}$ is defined by integrating $F$-valued integrands over the $G$-valued integrator $\mu$ and measuring the outcome in the norm of $H$. 
\item In the special case $F=\bR$ and $G=H$, the semivariation $\|f\mu\|_{\bR,H,H}$ is defined by integrating $\bR$-valued integrands over the $H$-valued integrator $\mu$ and measuring the outcome in the norm of $H$. This integral uses the canonical bilinear form $\bR\times H\to H$.
\end{enumerate}
\end{remark}

A major challenge in building a good integration theory is to extend the elementary integral to a large set of integrands.
We next list a selection of well-known examples.
All of these are non-cylindrical integrals, which can subsequently be cylindrified with the help of \cref{thm:cylindrical}. 

\begin{example}[Integral on bounded measurable functions]
\label{ex:integral_bounded}
\cite{diestel1977vector}
Assume that $F$ is finite dimensional.
Then, $\Elem(S,F)$ is dense in the set $\cL^\infty(S,F)$ of bounded (strongly) measurable functions with the supremum norm. 
Indeed, bounded subsets of $F$ can be covered by finitely many $\epsilon$-balls, for any $\epsilon>0$, and projection to the nearest center of a ball defines an $\epsilon$-approximation in $\cL^\infty(S,F)$. 
If $\|\mu\|_{F,G,H}<\infty$, then the elementary integral extends uniquely to a continuous linear operator
\begin{equation}
\cL^\infty(S,F) \ni f \mapsto \int_S f\mu \in H.
\end{equation}
A set $A\subset S$ is called a $\mu$-null set if
\begin{equation}
\inf \left\{\sum_{i\in\bN}\|\1_{A_i}\mu\|_{F,G,H}: A_i \in \cA, A\subseteq\bigcup_{i\in\bN}A_i\right\}=0.
\end{equation}
The space $L^\infty(S,F)$ is defined as the quotient of $\cL^\infty(S,F)$ modulo equality up to null sets, and the elementary integral factors through a continuous linear operator
\begin{equation}
L^\infty(S,F) \ni f \mapsto \int_S f\mu \in H.
\end{equation}
\end{example}

The disadvantage of this integral is that the uniform topology on the space of integrands is quite strong. 
A more elaborate version, which comes with a dominated convergence theorem, is presented next.

\begin{example}[Bartle integral]
\label{ex:integral_bartle}
\cite{bartle1956general}
Let $\mu:\Sigma\to G$ be countably additive on $\cA=\Sigma$ with $\|\mu\|_{F,G,H}<\infty$ and suppose that there exists a control measure $\nu$, i.e., a countably additive function $\nu:\Sigma\to \bR_{\geq 0}$ such that $\nu(A)\to 0$ is equivalent to $\|\1_A\mu\|_{F,G,H}\to 0$.
Then, the elementary integral extends uniquely to a continuous linear map
\begin{equation}
\cL^\infty(S,F) \ni f \mapsto \int_S f\mu \in H,
\end{equation}
where $\cL^\infty(S,F)$ carries the structure\footnote{Boundedly pointwise convergence is a non-topological notion of convergence; see e.g. \cite{dolecki2009initiation}.} of boundedly pointwise convergence. As before, one may pass to the quotient space $L^\infty(S,F)$.
\end{example}

The take-away is that existence of a control measure already implies dominated convergence, even in the general context of vector integration. This is extremely useful in many applications, including stochastic integration, but is limited to bounded integrands. The most general class of integrands can be obtained by an isometric extension of the elementary integral, as shown in the subsequent two examples.

\begin{example}[Lebesgue integral]
\label{ex:integral_lebesgue}
Let $F=G=H=\bR$ with bilinear form $\bR\times \bR\to \bR$ given by multiplication. Assume that $\mu:\Sigma\to\bR_{\geq 0}$ is countably additive on $\cA=\Sigma$, i.e., $\mu$ is a measure. Let $L^1(S,\bR)$ be the completion of $\Elem(S,\bR)$ with respect to the seminorm $\|\cdot\|_\mu$.
Then, $L^1(S,\bR)$ can be identified with the Lebesgue space of integrable real-valued functions modulo equality up to null sets. 
Moreover, the elementary integral extends to a linear operator
\begin{equation}
L^1(S,\bR) \ni f \mapsto \int_S f\mu \in \bR
\end{equation}
with operator norm less than or equal to $1$.
\end{example}

\begin{example}[Bochner integral]
\label{ex:integral_bochner}
\cite{bochner1933integration}
Let $F=H$ and $G=\bR$ with bilinear form $F\times\bR\to F$ given by vector-times-scalar multiplication, let $\mu:\Sigma\to\bR_{\geq 0}$ be non-negative and countably additive on $\cA=\Sigma$, and let $L^1(S,F)$ be the completion of $\Elem(S,F)$ with respect to the seminorm $\|\cdot\|_\mu$.
Then, $L^1(S,F)$ can be identified with the Bochner space of strongly measurable functions $f:S\to F$ such that $\|f\|\in L^1(S,\bR)$, modulo equality up to null sets.
Moreover, the elementary integral extends to a linear operator
\begin{equation}
L^1(S,H) \ni f \mapsto \int_S f\mu \in H
\end{equation}
with operator norm less than or equal to $1$.
\end{example}

Some further examples of integrals are presented in \cref{sec:cylindrical_stochastic}.
The common point is that the elementary integral has a continuous extension to some Banach space of integrands, which contains the elementary integrands as a dense subspace. 
We next show that any such integral can be turned into a new cylindrical integral in a canonical and general way.

\begin{theorem}[Cylindrical integration]
\label{thm:cylindrical}
Assume that the elementary integral in \cref{def:elementary_integral} has a continuous extension to some normed space $I$ of integrands:
\begin{equation}
I \ni f \mapsto \int_S f\mu \in H.
\end{equation}
If $E$ is non-trivial, then the linear operator
\begin{equation}
L(E,I) \ni f \mapsto \Big(E\ni e \mapsto \int_S f(e)\mu \in H\Big) \in L(E,H)
\end{equation}
has the same operator norm as the original integral and is called cylindrified integral.
\end{theorem}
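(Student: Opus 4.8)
The plan is to recognize the cylindrified integral as nothing more than post-composition with the original integral operator, and then to read off its norm from elementary operator-norm estimates, using the nontriviality of $E$ only for the lower bound.

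First I would fix notation. Write $T\colon I\to H$, $Tg=\int_S g\mu$, for the given continuous extension of the elementary integral, and let $\Phi\colon L(E,I)\to L(E,H)$ denote the candidate cylindrified integral, so that $\Phi(f)=\big(e\mapsto\int_S f(e)\mu\big)=T\circ f$. The first routine check is that $\Phi$ is well-defined and linear: for $f\in L(E,I)$ the composite $T\circ f$ is a composition of bounded linear maps, hence lies in $L(E,H)$, and the assignment $f\mapsto T\circ f$ is manifestly linear in $f$.

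For the upper bound $\|\Phi\|\le\|T\|$ I would estimate, for $f\in L(E,I)$ and $e\in E$ with $\|e\|\le 1$,
\[
\|(\Phi f)(e)\|_H=\|T(f(e))\|_H\le\|T\|\,\|f(e)\|_I\le\|T\|\,\|f\|_{L(E,I)},
\]
and take the supremum over such $e$. This is just submultiplicativity of the operator norm under composition and requires no hypothesis on $E$.

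The lower bound $\|\Phi\|\ge\|T\|$ is where nontriviality of $E$ enters, and is the only step that uses more than formal manipulation, albeit through a standard device. Since $E\ne\{0\}$, the Hahn--Banach theorem supplies a functional $\phi\in E^{*}$ with $\|\phi\|=1$. Given any nonzero $g\in I$, I would test $\Phi$ on the rank-one operator $f\in L(E,I)$ defined by $f(e)=\phi(e)\,g$; a direct computation gives $\|f\|_{L(E,I)}=\|\phi\|\,\|g\|_I=\|g\|_I$ and $(\Phi f)(e)=\phi(e)\,Tg$, whence $\|\Phi f\|_{L(E,H)}=\|\phi\|\,\|Tg\|_H=\|Tg\|_H$. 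Dividing by $\|f\|=\|g\|_I$ and taking the supremum over $g$ yields $\|\Phi\|\ge\sup_{g\ne 0}\|Tg\|_H/\|g\|_I=\|T\|$, and combining with the upper bound gives $\|\Phi\|=\|T\|$. The main point to watch is the attainment $\|\phi\|=1$: this is exactly where nontriviality of $E$ is needed and used, and it is what prevents the lower bound from degenerating. Everything else reduces to the behaviour of operator norms under composition.
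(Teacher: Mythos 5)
Your proof is correct and takes essentially the same route as the paper's: your upper bound is exactly the paper's chain of suprema $\|(\Phi f)(e)\|_H=\|T(fe)\|_H\le\|T\|$, and your rank-one operator $f=\phi(\cdot)\,g$ built from a Hahn--Banach functional is precisely the standard device behind the paper's terse assertion that ``the inequality is actually an equality because $E$ is non-trivial.'' One minor observation: since $\|\phi\|$ cancels in the ratio $\|\Phi f\|_{L(E,H)}/\|f\|_{L(E,I)}$, any nonzero $\phi\in E^{*}$ would do, so the normalization $\|\phi\|=1$ is convenient but not essential.
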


\begin{proof}
As $E$ is non-trivial, the functor $L(E,\cdot)$ is norm-preserving. To spell this out in further detail, let $T:I\to H$ be the given integral, and let 
	\begin{equation*}
		\hat T:L(E,I)\to L(E,H),
		\qquad
		f\mapsto\left(e\mapsto T(fe)\right)
	\end{equation*}
	be the cylindrified integral, obtained by applying the functor $L(E,\cdot)$ to $T$. Writing $\bigcirc$ for the unit balls in the respective spaces, we have 
\begin{align*}
\|\hat T\|_{L(L(E,I),L(E,H))}
&=
\sup_{f \in \bigcirc L(E,I)} 
\|\hat Tf\|_{L(E,H)}
=
\sup_{f \in \bigcirc L(E,I)} 
\sup_{e \in \bigcirc E}
\|(\hat Tf)e\|_{H}
\\&=
\sup_{f \in \bigcirc L(E,I)} 
\sup_{e \in \bigcirc E}
\|T(fe)\|_{H}
\leq
\sup_{f \in \bigcirc I} 
\|Tf\|_{H}
=
\|T\|_{L(I,H)},
\end{align*}
where the inequality is actually an equality because $E$ is non-trivial.
\end{proof}

The take-away is that any integral can be turned into a cylindrical integral, which is an element-wise application of the original integral. 
As an aside, the name cylindrical integral stems from stochastic integration: There, $H$ is a space of random variables, and elements of $L(E,H)$ are called cylindrical random variables; see \cref{sec:cylindrical_stochastic}.
Cylindrification can be expressed also in the language of vector measures, as explained in \cref{sec:vector_measure}.

\section{Cylindrical stochastic analysis}
\label{sec:cylindrical_stochastic}

This section describes some stochastic integrals together with their cylindrified counterparts and establishes well-posedness of cylindrical stochastic evolution equations under Lipschitz conditions on the coefficients.
Throughout this section,
$(\Omega,\cF,\bF=(\cF_t)_{t\in[0,T]},\bP)$ is a filtered probability space satisfying the usual conditions,
$\Omega_T=[0,T]\times\Omega$,
$\cR$ is the semi-ring of predictable rectangles in $\Omega_T$, i.e.,
\begin{equation}
\cR=\big\{\{0\}\times A:A\in\cF_0\big\} \cup \big\{(s,t]\times A: A \in \cF_s, 0\leq s\leq t\leq T\big\},
\end{equation}
$\cA$ is the algebra generated by $\cR$, and $\cP$ is the sigma algebra generated by $\cA$.
As any set in $\cA$ is a finite union of sets in $\cR$, the elementary functions over $\cR$ and $\cA$ coincide and are denoted simply by $\Elem(\Omega_T,\bR)$.
Unless specified otherwise, $\Omega_T$ carries the predictable sigma algebra $\cP$.
Subscripts $\bF$ in names of functions spaces denote adaptedness to the filtration $\bF$, examples being $C_\bF([0,T],L^2(\Omega,H))$ or $L^2_\bF(\Omega,L^2([0,T],H))$.
The space of Hilbert-Schmidt operators between Hilbert spaces $U$ and $H$ is denoted by $L_2(U,H)$.

\begin{example}[Semimartingale integral]
\label{ex:semimartingale}
\cite{kwapien1992random,bichteler2002stochastic,dinculeanu2000vector,metivier1980stochastic}
Let $p \in [1,\infty)$,
let $X:[0,T]\to L^p(\Omega,\bR)$ be a stochastic process which is adapted and right-continuous in probability,
and let $dX:\cR\to L^p(\Omega,\bR)$ be given by
\begin{align}
dX(\{0\}\times A)&=X_0\1_A, && A \in \cF_0,
\\
dX((s,t]\times A)&=(X_t-X_s)\1_A,&& A \in \cF_s, 0\leq s\leq t\leq T.
\end{align}
Then, $dX$ extends uniquely to an additive function $dX:\cA\to L^p(\Omega,\bR)$ because any set $\cA$ is a finite union of sets in $\cR$.
Moreover, the following are equivalent:
\begin{enumerate}[(a)]
\item\label{ex:semimartingale_a} $dX$ extends (uniquely) to a countably additive function $dX:\cP\to L^p(\Omega,\bR)$.
\item\label{ex:semimartingale_b} The elementary integral
\begin{equation}
\Elem(\Omega_T,\bR)
\ni
f
\mapsto
\int_{\Omega_T}fdX
\in L^p(\Omega,\bR)
\end{equation}
is continuous with respect to the supremum norm on $\Elem(\Omega_T,\bR)$ and therefore has a unique continuous extension\footnote{The set $\cE(\Omega_T,\bR)$ is dense in $\cL^\infty(\Omega_T,\bR)$, as shown in \cref{ex:integral_bounded}.}
\begin{equation}
\cL^\infty(\Omega_T,\bR)
\ni
f
\mapsto
\int_{\Omega_T}fdX
\in L^p(\Omega,\bR),
\end{equation}
whose operator norm $\|dX\|_{\bR,L^p(\Omega),L^p(\Omega)}$ is the semivariation of the vector measure $dX$ and is called $L^p$-Emery metric.
\item\label{ex:semimartingale_c} Up to a modification, $X=M+A$ for a $p$-integrable martingale $M$ and a process $A$ of $p$-integrable variation.
\end{enumerate}
As usual, one may pass from $\cL^\infty(\Omega_T,\bR)$ to its quotient $L^\infty(\Omega_T,\bR)$. 
Moreover, for any Banach space $E$, the resulting cylindrified integral is of the form
\begin{equation}
L(E,L^\infty(\Omega_T,\bR))
\ni
f
\mapsto
\int_{\Omega_T}fdX
\in L(E,L^p(\Omega,\bR)).
\end{equation}
\end{example}

\begin{example}[It\^o integral]
\label{ex:ito}
\cite{vanneerven2007stochastic,cox2011vector}
Let $H$ and $U$ be separable Hilbert spaces, and let $W$ be $U$-cylindrical Brownian motion, i.e., $W:L^2([0,T],U)\to L^2(\Omega,\bR)$ is a linear isometry with values in centered Gaussian random variables.
Then, the elementary integral
\begin{equation}
\Elem(\Omega_T,L_2(U,H)) \ni f \mapsto \int_{\Omega_T} fdW \in L^2(\Omega,H)
\end{equation}
is well defined and extends to the It\^o isometry
\begin{equation}
L^2(\Omega_T,L_2(U,H))=L^2_{\bF}(\Omega,L^2([0,T],L_2(U,H))) \ni f \mapsto \int_{\Omega_T} fdW \in L^2(\Omega,H).
\end{equation}
The equality above is due to the fact that $[0,T]$ carries an absolutely continuous measure \cite{rudiger2012isomorphism}.
More generally, if $H$ is merely UMD Banach and $p \in (1,\infty)$, the elementary integral extends to an isomorphism
\begin{equation}
L^p_{\bF}(\Omega,\gamma(L^2([0,T],U)),H) \ni f \mapsto \int_{\Omega_T} fdW \in L^p(\Omega,H),
\end{equation}
where $\gamma$ stands for $\gamma$-radonifying linear operators.
Even more generally, if $H$ is merely an abstract $L^1$ space, one retains a continuous (but not necessarily isomorphic) integral of the above form.
However, beyond these settings, there are Banach spaces $H$ such that the above extension of the elementary integral does not exist.
Nevertheless, by \cref{thm:cylindrical} it always exists in a cylindrical form, namely, as a linear isomorphism
\begin{equation}
L(E,L^p_{\bF}(\Omega,L^2([0,T],U)) \ni f \mapsto \int_{\Omega_T} fdW \in L(E,L^p(\Omega,\bR)).
\end{equation}
This generalizes the above integrals if $E$ is a dual or pre-dual of $H$ because $L^p(\Omega,H)$ is then continuously included in $L(E,L^p(\Omega,\bR))$.
\end{example}

\begin{example}[Stochastic convolutions]
\label{ex:convolution}
Let $H$ and $U$ be separable Hilbert spaces, let $W$ be $U$-cylindrical Brownian motion, let $E$ be a Banach space, and let $S:\bR_{\geq 0}\to L(E)$ be a strongly continuous semigroup.
For any $f \in L(E,L^2(\Omega_T,L_2(U,H)))$, the function
\begin{equation}
E\times[0,T] \ni (e,t)\mapsto \big((s,\omega)\mapsto \1_{[0,t]}(s) f(S_{t-s}e)(s,\omega)\big) \in L^2(\Omega_T,L_2(U,H))
\end{equation}
is continuous.
Applying the It\^o integral of \cref{ex:ito} yields a continuous function
\begin{equation}
E\times[0,T] \ni (e,t)\mapsto \int_0^t f(S_{t-s}e)(s) dW_s \in L^2(\Omega,H).
\end{equation}
The integral is continuous and adapted in $t\in[0,T]$ and continuous and linear in $e \in E$.
Thus, writing $f(S_{t-s}e)$ as $(S_{t-s}^*f)e$, which is a pull-back of $f$ along $S_{t-s}$ followed by evaluation at $e$, one obtains a continuous linear operator
\begin{equation}
L(E,L^2(\Omega_T,L_2(U,H)))\ni f \mapsto \int_0^\cdot S_{\cdot-s}^*f_s dW_s \in L(E,C_\bF([0,T],L^2(\Omega,H))),
\end{equation}
which shall be called cylindrical stochastic convolution.
Similarly, using pathwise Bochner integrals instead of It\^o integrals, one obtains a cylindrical deterministic convolution
\begin{equation}
L(E,L^2(\Omega_T,H))\ni f \mapsto \int_0^\cdot S_{\cdot-s}^*f_s ds \in L(E,C_\bF([0,T],L^2(\Omega,H))).
\end{equation}
\end{example}

The cylindrical convolutions in \cref{ex:convolution} can be used to develop a theory of cylindrical stochastic evolution equations, as shown next. 

\begin{theorem}[Cylindrical stochastic evolution equations]
\label{thm:cylindrical_stochastic_evolution}
Let $U$ and $H$ be separable Hilbert spaces,
let $W$ be $U$-cylindrical Brownian motion,
let $E$ be a Banach space,
let $S:\bR_{\geq 0}\to L(E)$ be a strongly continuous semigroup,
let $X_0 \in L(E,L^2((\Omega,\cF_0,\bP),H))$,
and let $F$ and $G$ be Lipschitz functions
\begin{align}
F:L(E,C_\bF([0,T],L^2(\Omega,H)))&\to L(E,C_\bF([0,T],L^2(\Omega,H))),
\\
G:L(E,C_\bF([0,T],L^2(\Omega,H)))&\to L(E,C_\bF([0,T],L^2(\Omega,L_2(U,H)))).
\end{align}
Then, there exists a unique $X \in L(E,C_\bF([0,T],L^2(\Omega,H)))$ which satisfies
\begin{equation}
X_t = S_t^*X_0 + \int_0^t S_{t-s}^*F(X)_s ds + \int_0^t S_{t-s}^*G(X)_s dW_s,
\end{equation}
for all $t \in [0,T]$, where the integrals are $E$-cylindrical, $S_{t-s}^*F$ denotes the pull-back of $F$ along $S_{t-s}$, and similarly for $S_{t-s}^*G$.
\end{theorem}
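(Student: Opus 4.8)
The plan is to realize the solution as the unique fixed point of the mild-solution map
\begin{equation*}
\Phi(X)_t = S_t^*X_0 + \int_0^t S_{t-s}^*F(X)_s\,ds + \int_0^t S_{t-s}^*G(X)_s\,dW_s
\end{equation*}
on the Banach space $\cX := L(E,C_\bF([0,T],L^2(\Omega,H)))$, and to invoke the Banach fixed point theorem. Since $\cX$ is a space of bounded operators into a Banach space it is itself complete, so it suffices to show that $\Phi$ maps $\cX$ into itself and that $\Phi$ (in a suitable equivalent norm) is a contraction; the fixed point is then automatically adapted because the two integral terms land in the adapted space $C_\bF$ by construction.

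First I would check well-definedness. The free term $t\mapsto S_t^*X_0$ lies in $\cX$ because $S$ is strongly continuous and $X_0\in L(E,L^2((\Omega,\cF_0,\bP),H))$; in particular $M:=\sup_{s\in[0,T]}\|S_s\|_{L(E)}$ is finite by the uniform boundedness principle. The two integral terms are exactly the cylindrical deterministic and stochastic convolutions of \cref{ex:convolution}, which are continuous linear operators; the only point to verify is that $F(X)$ and $G(X)$, which are valued in the $C_\bF$-spaces, are admissible integrands. This follows from the continuous inclusion $C_\bF([0,T],L^2(\Omega,\cdot))\hookrightarrow L^2(\Omega_T,\cdot)$ of norm $\sqrt{T}$, whose image lies in the \emph{predictable} $L^2$-space since continuous adapted processes are predictable. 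Hence $\Phi(X)\in\cX$.

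The contraction estimate is the heart of the argument, and here the cylindrical setting causes no extra difficulty: evaluating at $e\in E$ and using $\|S_{t-s}e\|_E\le M\|e\|_E$ reduces every bound to the classical one, where the Lipschitz property enters through $\|F(X)_s-F(Y)_s\|_{L(E,L^2(\Omega,H))}\le\Lip(F)\|X-Y\|_{\cX}$ and likewise for $G$. The drift term contributes a factor controlled by $M\,\Lip(F)\int_0^t ds$, while the stochastic term, estimated by the It\^o isometry of \cref{ex:ito}, contributes a factor controlled by $M\,\Lip(G)\big(\int_0^t ds\big)^{1/2}$. Taking the supremum over $t\in[0,T]$ and $\|e\|_E\le 1$ gives
\begin{equation*}
\|\Phi(X)-\Phi(Y)\|_{\cX} \le \big(MT\,\Lip(F) + M\sqrt{T}\,\Lip(G)\big)\,\|X-Y\|_{\cX},
\end{equation*}
which is a contraction for $T$ small. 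To reach an arbitrary horizon I would pass to the equivalent weighted norm $\|X\|_\lambda:=\sup_{t,e}e^{-\lambda t}\|X_t(e)\|_{L^2(\Omega,H)}/\|e\|_E$: the time-integral smoothing of both convolutions turns the factors $\int_0^t ds$ and $\big(\int_0^t ds\big)^{1/2}$ into quantities decaying like $1/\lambda$ and $1/\sqrt{\lambda}$, so that $\Phi$ becomes a $\|\cdot\|_\lambda$-contraction for $\lambda$ large.

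The step I expect to be the main obstacle is exactly this passage to a global contraction: making the weighted-norm estimate for the \emph{stochastic} convolution precise (so that the It\^o isometry genuinely produces the decaying $\lambda$-dependence) and handling the fact that $F,G$ act on the whole path space rather than pointwise in time. By contrast, the genuinely non-trivial analytic input---that the cylindrical convolutions exist and are bounded with the classical operator norms---is already supplied by \cref{ex:convolution} together with the norm-preservation of \cref{thm:cylindrical}, so once the contraction is established the existence and uniqueness of $X\in\cX$ follow immediately, and reading off the fixed-point identity gives the asserted mild equation.
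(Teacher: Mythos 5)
Your proposal is correct and follows the paper's proof almost verbatim in its main steps: the same fixed-point map on $L(E,C_\bF([0,T],L^2(\Omega,H)))$, well-definedness via the cylindrical convolutions of \cref{ex:convolution} combined with the predictable-version inclusion of norm $\sqrt{T}$ (the paper invokes \cref{lem:predictable} here), and the same small-time Lipschitz bound of order $\sqrt{T}$ for the fixed-point map. The one genuine divergence is the globalization to arbitrary $T$: the paper concatenates solutions on small sub-intervals, whereas you pass to the weighted norm $\|X\|_\lambda=\sup_{t,e}e^{-\lambda t}\|X_t(e)\|_{L^2(\Omega,H)}/\|e\|_E$. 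The weighted norm buys a one-shot global contraction with no gluing of solutions across sub-intervals (and no need to re-verify uniqueness at the seams); concatenation avoids redoing the convolution estimates in the weighted norm. Be aware, however, that \emph{both} devices quietly use more than the stated hypothesis that $F,G$ are Lipschitz on path space, namely non-anticipativity: your weighted-norm argument needs an estimate of the form $\|F(X)_s-F(Y)_s\|\le \Lip(F)\sup_{r\le s}\|X_r-Y_r\|$, since with only the plain Lipschitz bound one has merely $\|X-Y\|_\infty\le e^{\lambda T}\|X-Y\|_\lambda$, and the factor $e^{\lambda T}$ cancels the $1/\lambda$ and $1/\sqrt{\lambda}$ decay you are counting on; symmetrically, the paper's concatenation needs $F(X)$ on $[0,T_0]$ to depend only on $X$ on $[0,T_0]$, so that the equation restricts to sub-intervals at all. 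For genuinely anticipative Lipschitz coefficients the conclusion can even fail for a fixed horizon --- take, deterministically, $S$ trivial, $G=0$, and $F(X)_s=(X_T-X_0)/T+1$, for which the equation forces $X_T=X_T+T$ --- so a causality assumption is implicitly needed by you and by the paper alike. Your flagged ``main obstacle'' is therefore exactly the right place to be careful: granting causality, your weighted-norm computation closes via the standard bound $\bigl(\int_0^t e^{-2\lambda(t-s)}\,ds\bigr)^{1/2}\le 1/\sqrt{2\lambda}$ applied after the It\^o isometry, and your argument is then on the same footing as, and slightly more self-contained than, the paper's concatenation.
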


\begin{proof}
We first verify that the right-hand side of the integral equation is well-defined.
For any $X \in L(E,C_\bF([0,T],L^2(\Omega,H)))$, the integrands are given by
\begin{align}
F(X) &\in L(E,C_\bF([0,T],L^2(\Omega,H))),
&
G(X) &\in L(E,C_\bF([0,T],L^2(\Omega,L_2(U,H)))).
\end{align}
Any continuous adapted function with values in $L^2(\Omega,H)$ or $L^2(\Omega,L_2(U,H))$ has a predictable version by \cref{lem:predictable}, giving rise to continuous inclusions
\begin{align}
C_\bF([0,T],L^2(\Omega,H))&\to L^2(\Omega_T,H),
\\
C_\bF([0,T],L^2(\Omega,L_2(U,H)))&\to L^2(\Omega_T,L_2(U,H)).
\end{align}
For later use, we note that the operator norm of these inclusions is $\sqrt{T}$.
Via these inclusions, the integrands are given by
\begin{align}
F(X) &\in L(E,L^2(\Omega_T,H)),
&
G(X) &\in L(E,L^2(\Omega_T,L_2(U,H))).
\end{align}
By an application of the cylindrical stochastic convolutions in \cref{ex:convolution}, one obtains that the right-hand side of the integral equation,
\begin{align}
K(X):=S_\cdot^*X_0+\int_0^\cdot S_{\cdot-s}^*F(X)_s ds+\int_0^\cdot S_{\cdot-s}^*G(X)_s dW_s
\end{align}
belongs to $L(E,C_\bF([0,T],L^2(\Omega,H)))$.
The function
\begin{equation}
K: L(E,C_\bF([0,T],L^2(\Omega,H))) \to L(E,C_\bF([0,T],L^2(\Omega,H)))
\end{equation}
is Lipschitz continuous as a composition of Lipschitz functions.
The Lipschitz constant of $K$ is bounded by a constant times $\sqrt{T}$.
Thus, for sufficiently small $T$, the operator $K$ is a contraction and has a unique fixed point by the Banach fixed point theorem.
For arbitrary $T$, one obtains a unique fixed point by concatenation of solutions on small sub-intervals of $[0,T]$.
\end{proof}

\section{Cylindrically measure-valued term structure models}
\label{sec:term_structure}

This section is devoted to cylindrical random measures and applications to financial term structure models.
Throughout this section,
$T^*\in\bR_{>0}$,
$(\Omega,\cF,\bF=(\cF_t)_{t\in[0,T^*]},\bP)$ is a filtered probability space satisfying the usual conditions,
$\Omega_{T^*}=[0,T^*]\times\Omega$ with the predictable sigma algebra $\cP$,
$W$ is $U$-cylindrical Brownian motion on a separable Hilbert space $U$,
$\cB(\bR_{>0})$ is the Borel sigma algebra on the positive half-line $\bR_{>0}=(0,\infty)$,
and $C_0(\bR_{>0})$ is the Banach space of continuous functions $f:\bR_{>0}\to \bR$ with $\lim_{x\to 0}f(x)=\lim_{x\to\infty}f(x) = 0$.
By the Riesz--Markov--Kakutani representation theorem, $C_0(\bR_{>0})^*$ is the space $\cM(\bR_{>0})$ of signed measures $\mu:\cB(\bR_{>0})\to\bR$ with finite total variation $\|\mu\|_{C_0(\bR_{>0})^*}=\|\mu\|_{\bR,\bR,\bR}$.
Similarly, for any Banach space $E$, $\cM(\bR_{>0},E)$ denotes the set of countably additive vector measures $\mu:\cB(\bR_{>0})\to E$ with finite semivariation $\|\mu\|_{\bR,E,E}$.

The origin of this paper lies in the difficulty of constructing measure-valued stochastic integrals, as needed for Heath--Jarrow--Morton models of discontinuous term structures. This difficulty is explained next, and a solution via cylindrical random measures is developed subsequently.

\begin{remark}[Measure-valued stochastic integration]
\label{rem:measure_valued_integration}
The development of measure-valued stochastic integrals meets several difficulties.
As the space $\cM(\bR_{>0})$ is not UMD, there is no It\^o isometry in the sense of \cref{ex:ito}.
However, $\cM(\bR_{>0})$ is an abstract $L^1$ space, i.e., a Banach lattice such that $\|x+y\|=\|x\|+\|y\|$ for any non-negative elements $x$ and $y$ whose infimum $x\wedge y$ vanishes.
Thus, it is lattice isometric to some $L^1$ space \cite[Theorem~4.27]{aliprantis2006positive}.
Therefore, a decoupling inequality holds \cite[Corollary~3.21]{cox2012stochastic}, which implies that the elementary It\^o integral extends to a continuous linear map \cite[Theorem~3.29]{cox2012stochastic}
\begin{equation}
L^2_\bF(\Omega,\gamma(L^2([0,T],U),\cM(\bR_{>0})) \ni f \mapsto \int_0^T fdW \in L^2(\Omega,\cM(\bR_{>0})),
\end{equation}
where $\gamma$ denotes the space of gamma-radonifying operators.
This integral can be used to build measure-valued term structure models which are parameterized by time-of-maturity.
However, it is ill-suited for parameterization by time-to-maturity, as in Heath--Jarrow--Morton models, because measure-valued stochastic convolutions involving the shift semigroup are ill-defined.
For example, if $W$ is scalar Brownian motion, $\delta_0 \in \cM(\bR_{>0})$ is the Dirac measure, and $S$ is the semigroup of left-shifts of measures, then the integral $\int_0^t S_{t-s}\delta_0dW_s$ is almost never a measure because it is the distributional derivative of a Brownian path: indeed, for any $f \in C^\infty_c((0,t))$,
\begin{equation}
\int_0^t S_{t-s}\delta_0(f)dW_s = \int_0^t f(t-s) dW_s = \int_0^t f'(t-s) W_s ds.
\end{equation}
Abstractly, the problem is that the shift semigroup on $\cM(\bR_{>0})$ is not strongly measurable and not Rademacher-bounded.
Replacing the norm topology on $\cM(\bR_{>0})$ by the weak-* topology does not help because the stochastic integral in the above example is not a measure-valued random variable despite the integrand $s\mapsto S_{t-s}\delta_0$ being bounded predictable.
The integral is, however, a cylindrical measure-valued random variable, i.e., an element of $L(C_0(\bR_{>0}),L^2(\Omega))$.
This is detailed in the sequel.
\end{remark}

The following lemma \eqref{lem:random_measures_a} describes cylindrical random measures as a generalization of random measures and \eqref{lem:random_measures_b} explains their relation to $L^p(\Omega)$-valued vector measures.
An important consequence of \eqref{lem:random_measures_b} is that cylindrical vector measures can be integrated not only against $C_0$ functions, as suggested by their definition, but also against bounded Borel-measurable functions.

\begin{lemma}[Cylindrical random measures]
\label{lem:random_measures}
\begin{enumerate}[(a)]
\item \label{lem:random_measures_a}
For any $p\in[1,\infty]$, there is a continuous inclusion
\begin{equation}
L^p(\Omega,\cM(\bR_{>0})) \ni \mu \mapsto \Big(f\mapsto \int f\mu\Big) \in L(C_0(\bR_{>0}),L^p(\Omega)).
\end{equation}
\item \label{lem:random_measures_b}
For any $p\in(1,\infty)$, there is an isometric isomorphism
\begin{equation}
\cM(\bR_{>0},L^p(\Omega)) \ni \mu \mapsto \Big(f\mapsto \int f\mu\Big) \in L(C_0(\bR_{>0}),L^p(\Omega)).
\end{equation}
\end{enumerate}
\end{lemma}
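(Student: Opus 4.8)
The plan is to treat part \eqref{lem:random_measures_a} as a direct norm estimate and part \eqref{lem:random_measures_b} as a vector-valued Riesz--Markov representation theorem, in which the reflexivity of $L^p(\Omega)$ for $p\in(1,\infty)$ is the decisive ingredient. I would handle the two parts separately.

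For part \eqref{lem:random_measures_a} I would argue pointwise in $\omega$. The elementary bound $|\int f\,\mu(\omega)|\le\|f\|_{C_0(\bR_{>0})}\,\|\mu(\omega)\|_{\cM(\bR_{>0})}$ holds for every $f\in C_0(\bR_{>0})$, and taking $L^p(\Omega)$-norms yields $\|\int f\mu\|_{L^p(\Omega)}\le\|f\|_{C_0(\bR_{>0})}\,\|\mu\|_{L^p(\Omega,\cM(\bR_{>0}))}$. Thus $f\mapsto\int f\mu$ is bounded with operator norm at most $\|\mu\|_{L^p(\Omega,\cM(\bR_{>0}))}$, which gives continuity of the inclusion. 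Injectivity I would deduce from separability of $C_0(\bR_{>0})$: if $\int f\mu=0$ for all $f$, then testing against a fixed countable dense family shows $\mu(\omega)=0$ as a measure for almost every $\omega$, so $\mu=0$ in $L^p(\Omega,\cM(\bR_{>0}))$.

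For part \eqref{lem:random_measures_b} I would first check that the map is a well-defined linear isometry onto its image. Given $\mu\in\cM(\bR_{>0},L^p(\Omega))$, the elementary integral $f\mapsto\int f\mu=\sum_i f_i\mu(A_i)$ of \cref{def:elementary_integral} has operator norm over $\Elem(\bR_{>0},\bR)$ equal, by definition, to the semivariation $\|\mu\|_{\bR,L^p(\Omega),L^p(\Omega)}$. Since $F=\bR$ is finite dimensional and the semivariation is finite, \cref{ex:integral_bounded} extends this to a bounded operator $T_\mu:C_0(\bR_{>0})\to L^p(\Omega)$. Conversely, regularity of $\mu$ on the Polish space $\bR_{>0}$ lets the indicators entering the semivariation be approximated in supremum norm by $C_0$-functions, so the supremum defining the semivariation is already attained over $C_0(\bR_{>0})$; hence $\|T_\mu\|_{L(C_0(\bR_{>0}),L^p(\Omega))}=\|\mu\|_{\bR,L^p(\Omega),L^p(\Omega)}$ and the map is isometric.

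The heart of the matter is surjectivity: I would show that every $T\in L(C_0(\bR_{>0}),L^p(\Omega))$ equals some $T_\mu$. Passing to the one-point compactification $K$ of $\bR_{>0}$ realizes $C_0(\bR_{>0})$ as the ideal of functions vanishing at the added point inside $C(K)$, and extending $T$ by zero on the constant function produces $\tilde T\in L(C(K),L^p(\Omega))$. The classical operator-representation theorem for $C(K)$-spaces (Dinculeanu; Diestel--Uhl) then yields a finitely additive, regular, semivariation-bounded measure $\tilde\mu$ with values a priori in the bidual $L^p(\Omega)^{**}$ and $\tilde T(g)=\int g\,\tilde\mu$. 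Here $p\in(1,\infty)$ enters decisively: $L^p(\Omega)$ is reflexive, so $L^p(\Omega)^{**}=L^p(\Omega)$ and every bounded operator out of $C(K)$ is automatically weakly compact; the weak-compactness criterion upgrades $\tilde\mu$ to a countably additive $L^p(\Omega)$-valued measure. Restricting $\tilde\mu$ to the Borel sets of $\bR_{>0}$ produces $\mu\in\cM(\bR_{>0},L^p(\Omega))$ with $T=T_\mu$, and the isometry from the previous step matches the norms. I expect the main obstacle to be exactly this upgrade from a finitely additive, bidual-valued measure to a countably additive $L^p(\Omega)$-valued one; this is where reflexivity is indispensable and where the excluded endpoints $p\in\{1,\infty\}$ genuinely fail, since for $p=1$ the bidual $L^1(\Omega)^{**}$ is strictly larger and operators from $C(K)$ need not be weakly compact.
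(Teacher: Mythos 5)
Your overall route coincides with the paper's. For part (a) you use the same elementary estimate for continuity, and your injectivity argument via a countable dense family of test functions is the same countability device the paper deploys with the open sets $U_f=\{\tilde\nu:\int f\tilde\nu\neq 0\}$. For part (b), the paper likewise deduces surjectivity from the Diestel--Uhl $C(K)$-representation circle of ideas, only it works directly with the bidual extension $T^{**}:C_0(\bR_{>0})^{**}\to L^p(\Omega)$ (which lands in $L^p(\Omega)$ by reflexivity), defines $\mu(A)=T^{**}\1_A$ on the isometric copy of $B(\bR_{>0})$ inside $C_0(\bR_{>0})^{**}$, and upgrades weak countable additivity (from weak-* to weak-* continuity of $T^{**}$) to countable additivity by Orlicz--Pettis. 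Your variant via the one-point compactification and automatic weak compactness of operators from $C(K)$ into a reflexive space is a legitimate instantiation of the same theorem and buys nothing essentially different.

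There is, however, a genuine flaw in your isometry step: indicator functions $\1_A$ of nontrivial Borel sets are \emph{not} uniform limits of $C_0$-functions, so ``regularity lets the indicators entering the semivariation be approximated in supremum norm'' cannot work as stated. The inequality $\|T_\mu\|\le\|\mu\|_{\bR,L^p(\Omega),L^p(\Omega)}$ is the easy direction (approximate $f\in C_0(\bR_{>0})$ uniformly by elementary functions). For the reverse inequality you need either (i) a control-measure argument: take a Rybakov control measure $\lambda$ for $\mu$, approximate $\1_A$ by continuous functions $\lambda$-almost everywhere via Lusin/Urysohn, and pass to the limit using bounded convergence as in \cref{ex:integral_bartle}; or (ii) the bidual argument: since $B(\bR_{>0})$ embeds isometrically into $C_0(\bR_{>0})^{**}$ and $\|T^{**}\|=\|T\|$, every elementary $f$ with $\|f\|_\infty\le 1$ satisfies $\|\int f\mu\|_{L^p(\Omega)}=\|T^{**}f\|_{L^p(\Omega)}\le\|T\|$, which is how the paper's construction secures the isometry. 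Separately, your closing claim that $p=1$ ``genuinely fails'' is incorrect: $L^1(\Omega)$ contains no copy of $c_0$ (it is weakly sequentially complete), so by Pe\l czy\'nski's theorem every bounded operator from $C(K)$ into $L^1(\Omega)$ is automatically weakly compact, and the representation still produces a countably additive $L^1(\Omega)$-valued measure; the genuine obstruction among the excluded endpoints is at $p=\infty$. This side remark does not affect the lemma as stated, but the reasoning behind it should not be left standing.
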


\begin{proof}
\begin{enumerate}[(a)]
\item Let $i$ denote the function in \eqref{lem:random_measures_a}.
To show that $i$ is injective, let $\mu \in L^p(\Omega,\cM(\bR_{>0}))$ with $i(\mu)=0$.
Then, the support of $\mu$ is $\{0\}$ because for any non-zero $\nu\in\cM(\bR_{>0})$, there is $f \in C_0(\bR_{>0})$ with $\int f\nu\neq 0$, and then $U_f=\{\tilde \nu \in \cM(\bR_{>0}):\int f\tilde\nu\neq 0\}$ is an open neighborhood of $\nu$ which satisfies $\bP[\mu\in U_f]=\bP[\int f\mu\neq 0]=0$.
Therefore, $\mu$ vanishes, and $i$ is injective.
Furthermore, $i$ is continuous thanks to the following inequality, which holds for all $\mu \in L^p(\Omega,\cM(\bR_{>0}))$ and $f\in C_0(\bR_{>0})$:
\begin{equation}
\Big\|\int f\mu\Big\|_{L^p(\Omega)}
\leq
\big\|\|f\|_{C_0(\bR_{>0})}\|\mu\|_{\cM(\bR_{>0})}\big\|_{L^p(\Omega)}
=
\|f\|_{C_0(\bR_{>0})} \|\mu\|_{L^p(\Omega,\cM(\bR_{>0}))}.
\end{equation}
\item Let $i$ denote the function in \eqref{lem:random_measures_b}. Then, $i$ is well defined because the elementary integral of any $\mu\in \cM(\bR_{>0},L^p(\Omega))$ extends isometrically to a continuous linear operator $\cL^\infty(\bR_{>0})\to L^p(\Omega)$, as explained in \cref{ex:integral_bounded}, and then restricts to a continuous linear operator $i(\mu):C_0(\bR_{>0})\to L^p(\Omega)$.
The semivariation of $\mu$ is the operator norm of the elementary integral of $\mu$ and coincides with the operator norms of the above extension and restriction.
Thus, $i$ is an isometry.
The surjectivity of $i$ follows from a version of the Riesz representation theorem; cf.  \cite[Section~VI.2]{diestel1977vector}.
Thanks to the reflexivity of $L^p(\Omega)$, the bi-dual of any $T \in L(C_0(\bR_{>0}),L^p(\Omega))$ is an extension $T^{**}:C_0(\bR_{>0})^{**}\to L^p(\Omega)$ of $T$.
Let $B(\bR_{>0})$ be the closure of $\Elem(\bR_{>0})$ in the uniform norm.
As $B(\bR_{>0})$ is isometrically included in $C_0(\bR_{>0})^{**}$ and contains all indicators of Borel sets, one can define $\mu:\cB(\bR_{>0})\ni A \mapsto T^{**}\1_A \in L^p(\Omega)$.
As $T^{**}$ is weak-* to weak-* continuous, $\mu$ is weakly countably additive and, by the Orlicz--Pettis theorem \cite[Corollary I.4.4]{diestel1977vector}, countably additive.
Moreover, $i(\mu)=T$ because the two operators coincide on the dense set of elementary integrands.
Thus, $i$ is surjective.
\end{enumerate}
\end{proof}

\begin{remark}[Countable additivity]
\label{rem:countable_additivity}
Let $p\in(1,\infty)$.
Then, any cylindrical random measure $\mu \in L(C_0(\bR_{>0}),L^p(\Omega))$ is `almost' countably additive in the following sense: for any sequence of disjoint sets $A_i \in \cB(\bR_{>0})$,
\begin{equation}
\mu\Big(\bigcup_i A_i\Big) = \sum_i \mu(A_i)
\end{equation}
up to a null set, which may depend on the sequence $A_i$.
If $\mu$ is non-cylindrical, i.e., $\mu \in L^p(\Omega,\cM(\bR_{>0}))$, the null set can be chosen independently of the sequence $A_i$.
This difference is minor in financial applications.
\end{remark}

Cylindrically measure-valued processes can be constructed by solving cylindrical stochastic evolution equations, as shown next.
The corresponding theory has been developed in \cref{thm:cylindrical_stochastic_evolution}.
For financial applications, the two most important cases are where the semigroup acts via right-shifts on $C_0(\bR_{>0})$ and where the semigroup acts trivially as the identity on $C_0(\bR_{>0})$.
Note that right-shifts on $C_0(\bR_{>0})$ correspond to left-shifts on the dual space $\cM(\bR_{>0})$, in line with the Heath--Jarrow--Morton framework.

\begin{corollary}[Cylindrically measure-valued evolution equations]
\label{cor:cylindrically_evolution}
Let $S:\bR_{\geq 0}\to L(C_0(\bR_{>0}))$ be a strongly continuous semigroup,
let $X_0 \in L(C_0(\bR_{>0}),L^2(\Omega,\cF_0,\bP))$,
and let $F$ and $B$ be Lipschitz functions
\begin{align}
F:L(C_0(\bR_{>0}),C_\bF([0,T],L^2(\Omega)))&\to L(C_0(\bR_{>0}),C_\bF([0,T],L^2(\Omega))),
\\
G:L(C_0(\bR_{>0}),C_\bF([0,T],L^2(\Omega)))&\to L(C_0(\bR_{>0}),C_\bF([0,T],L^2(\Omega,U))).
\end{align}
Then, there exists a unique $X \in L(C_0(\bR_{>0}),C_\bF([0,T],L^2(\Omega)))$ which satisfies for all $t \in [0,T]$ that
\begin{equation}
X_t = S_t^*X_0 + \int_0^t S_{t-s}^*F(X)_s ds + \int_0^t S_{t-s}^*G(X)_s dW_s.
\end{equation}
\end{corollary}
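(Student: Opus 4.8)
The plan is to recognize this corollary as the special case of \cref{thm:cylindrical_stochastic_evolution} obtained by setting $E=C_0(\bR_{>0})$ and $H=\bR$. First I would check that all the abstract data are admissible inputs to the theorem. The space $C_0(\bR_{>0})$ is a Banach space, $\bR$ is a separable (one-dimensional) Hilbert space, $S$ is a strongly continuous semigroup on $L(C_0(\bR_{>0}))=L(E)$, and $W$ is $U$-cylindrical Brownian motion on the separable Hilbert space $U$. The initial condition matches trivially, since $L^2((\Omega,\cF_0,\bP),\bR)=L^2(\Omega,\cF_0,\bP)$, so $X_0\in L(C_0(\bR_{>0}),L^2(\Omega,\cF_0,\bP))$ is precisely an element of $L(E,L^2((\Omega,\cF_0,\bP),H))$. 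Likewise the drift coefficient $F$ has exactly the type demanded by the theorem.

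The one point requiring attention is the codomain of $G$. In the theorem, $G$ takes values in $L(E,C_\bF([0,T],L^2(\Omega,L_2(U,H))))$, whereas in the corollary it takes values in $L(C_0(\bR_{>0}),C_\bF([0,T],L^2(\Omega,U)))$. These agree because the choice $H=\bR$ makes the Hilbert--Schmidt space $L_2(U,\bR)$ canonically isometric to $U$: by the Riesz representation theorem every $T\in L_2(U,\bR)$ is of the form $T(u)=\langle u,v\rangle_U$ for a unique $v\in U$, and the Hilbert--Schmidt norm of $T$ equals $\|v\|_U$. This isometry induces $L^2(\Omega,L_2(U,\bR))\cong L^2(\Omega,U)$ and hence an isometric identification of $L(C_0(\bR_{>0}),C_\bF([0,T],L^2(\Omega,L_2(U,\bR))))$ with $L(C_0(\bR_{>0}),C_\bF([0,T],L^2(\Omega,U)))$, under which the Lipschitz map $G$ of the corollary becomes exactly the Lipschitz map $G$ required by the theorem.

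With this identification in place, the conclusion of \cref{thm:cylindrical_stochastic_evolution} yields a unique $X\in L(C_0(\bR_{>0}),C_\bF([0,T],L^2(\Omega)))$ solving the stated integral equation, with all three integrals interpreted $C_0(\bR_{>0})$-cylindrically as in \cref{ex:convolution}. I do not anticipate any genuine obstacle, since everything reduces to the abstract well-posedness result; the only substantive verification is the elementary identification $L_2(U,\bR)\cong U$ described above, and perhaps a remark that the contraction and concatenation argument of the theorem is insensitive to the particular Banach space $E$, so it applies verbatim with $E=C_0(\bR_{>0})$.
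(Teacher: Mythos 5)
Your proposal is correct and matches the paper's proof, which simply declares the corollary a special case of \cref{thm:cylindrical_stochastic_evolution}; your instantiation $E=C_0(\bR_{>0})$, $H=\bR$ together with the Riesz identification $L_2(U,\bR)\cong U$ is exactly the verification the paper leaves implicit.
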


\begin{proof}
This is a special case of \cref{thm:cylindrical_stochastic_evolution}.
\end{proof}

We next describe financial term structure models where the collection of forward rates is given by a cylindrically measure-valued process, as given by \cref{cor:cylindrically_evolution} above.
Then, the prices of bonds or futures are obtained as usual by integration of maturities and exponentiation, i.e., by the application of cylindrical functionals.
This yields a large financial market, whose tradeable assets are cylindrical functionals of a cylindrical process.
Besides the minimal assumption that linear combinations of tradeable assets are tradeable, there is little extra structure imposed.
In particular, it is not assumed that the tradeable assets constitute a non-cylindrical process, and this assumption would also be lacking an economic foundation.

As a proof of concept, we present in \cref{thm:term_structure} below a term structure model with two components: one for interest rates, parameterized by time-to-maturity, and another one for energy prices, parameterized by time-of-maturity.
The motivation is that both interest rate and energy markets exhibit jumps at predictable times, which force the forward rates to be measure-valued.
Moreover, the parameterization by time-to-maturity is well suited for dynamics depending foremost on market structure, as in the case of interest rates, whereas the parameterization by time-of-maturity is well suited for dynamics depending strongly on seasonalities, as in the case of energy markets.
The additive structure of energy forwards corresponds to advance settlement of future contracts and is broken for continuous or terminal settlement unless the interest rate is constant \cite[Section~4.1]{benth2008stochastic}.
We do not know how to model continuously or terminally settled contracts with stochastic interest rates in the present generality; cf. \cite{benth2019mean}.

\begin{theorem}[Cylindrical term structure models]
\label{thm:term_structure}
Let $S:\bR_{\geq 0}\to L(C_0(\bR_{>0}))$ be the semigroup of right shifts,
let $F,\tilde F \in L(C_0(\bR_{>0}),C_\bF([0,T^*],L^2(\Omega)))$,
let $G,\tilde G \in L(C_0(\bR_{>0}),C_\bF([0,T^*],L^2(\Omega,U)))$,
and let $X,\tilde X \in L(C_0(\bR_{>0}),C_\bF([0,T^*],L^2(\Omega)))$ satisfy for all $t\in[0,T^*]$ that
\begin{align}
X_t &= S_t^*X_0 + \int_0^t S_{t-s}^*F_s ds + \int_0^t S_{t-s}^*G_s dW_s,
\\
\tilde X_t &= \tilde X_0 + \int_0^t \tilde F_s ds + \int_0^t \tilde G_s dW_s.
\end{align}
Then, the following statements hold, with operators on $C_0(\bR_{>0})$ viewed as vector measures using the Riesz representation in  \cref{lem:random_measures}~\eqref{lem:random_measures_b}:
\begin{enumerate}[(a)]
\item\label{thm:term_structure_a} For every $T,T_1,T_2 \in [0,T^*]$, the bond price and energy futures processes
\begin{equation}
P(t,T) = \exp\big(-X_t(0,T-t]\big),
\qquad
\tilde P(t,T_1,T_2) = \tilde X_t(T_1,T_2),
\qquad
t\in[0,T^*],
\end{equation}
have a predictable version, which is unique up to $dt\otimes\bP$-null sets.
\item\label{thm:term_structure_b} The roll-over bank account process
\begin{equation}
B_t = \lim_{n\to \infty} \prod_{i=1}^n P(t(i-1)/n,ti/n)^{-1},
\qquad
t \in [0,T^*],
\end{equation}
exists, has a predictable version, and is given by
\begin{align}
B_t &= \exp\Big(X_0(0,t]+\int_0^t F_s(0,t-s] ds + \int_0^t G_s(0,t-s]dW_s\Big),
&&
t\in[0,T^*].
\end{align}
\item\label{thm:term_structure_c} For every $T,T_1,T_2\in[0,T^*]$, the discounted bond price $P(t,T)/B_t$ and the discounted energy future $\tilde P(t,T_1,T_2)/B_t$ are local martingales if and only if the following drift conditions hold:
\begin{equation}
F_t(0,T-t]=\frac12 \|G_t(0,T-t]\|_U^2,
\qquad
\tilde F_t(T_1,T_2)=0,
\qquad
dt\otimes\bP\text{-almost surely.}
\end{equation}
\item\label{thm:term_structure_e} If the drift condition \eqref{thm:term_structure_c} is valid for all $T,T_1,T_2 \in [0,T^*]$, then there is no asymptotic free lunch with vanishing risk under any probability measure equivalent to $\bP$ in the large financial market whose primary traded asssets are the discounted bond prices $P(t,T)/B_t$ together with the discounted energy futures $\tilde P(t,T_1,T_2)/B_t$, for $T,T_2,T_2\in[0,T^*]$.
\end{enumerate}
\end{theorem}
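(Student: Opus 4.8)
The plan is to obtain the no-asymptotic-free-lunch-with-vanishing-risk (NAFLVR) property from the fundamental theorem of asset pricing for large financial markets in \cite{cuchiero2016new}, by using the drift condition to exhibit $\bP$ itself as an equivalent martingale measure for the entire family of primary assets.

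First I would assemble what parts \eqref{thm:term_structure_a}--\eqref{thm:term_structure_c} already provide. Under the drift condition, each discounted bond price $P(\cdot,T)/B$ and each discounted energy future $\tilde P(\cdot,T_1,T_2)/B$ is a local martingale under $\bP$, simultaneously for every $T,T_1,T_2\in[0,T^*]$. Since every local martingale is a $\sigma$-martingale, $\bP$ is an equivalent $\sigma$-martingale---and hence separating---measure common to all primary assets.

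Next I would cast the discounted assets as a large financial market in the sense of \cite{cuchiero2016new}. The maturity index set $[0,T^*]$ is separable, and by the $L^2(\Omega)$-continuity of the asset processes in their maturities---inherited from the predictable-version construction in \eqref{thm:term_structure_a}---it suffices to work with a countable dense family of maturities $T,T_1,T_2\in\bQ\cap[0,T^*]$, organized into a nested sequence of small markets each carrying finitely many assets. On this large market I would invoke the easy implication of their fundamental theorem: the existence of a single equivalent separating measure under which all discounted assets are $\sigma$-martingales implies NAFLVR. As $\bP$ is exactly such a measure, NAFLVR holds under $\bP$. Finally, the admissible strategies, the null sets, and therefore the cone of superreplicable claims entering the definition of NAFLVR depend only on the equivalence class of $\bP$; hence the property is invariant under equivalent changes of measure, which gives the assertion for every probability measure equivalent to $\bP$.

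The main obstacle I anticipate is the precise embedding into the Cuchiero--Klein--Teichmann framework rather than any new stochastic estimate. One must verify that the discounted assets are genuine semimartingales with the measurability and integrability demanded there, and---more delicately---that restricting to a countable dense set of maturities loses no arbitrage: by $L^2(\Omega)$-continuity in maturity the cone of claims attainable from rational maturities is dense in that from the full continuum, so that NAFLVR for the countable market transfers to the full market. Given the local-martingale property from \eqref{thm:term_structure_c}, verifying the separating-measure hypothesis itself should be routine, so the real work lies in this large-market bookkeeping.
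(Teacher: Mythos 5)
Your proposal proves only statement~\eqref{thm:term_structure_e} and explicitly takes \eqref{thm:term_structure_a}--\eqref{thm:term_structure_c} as given (``First I would assemble what parts (a)--(c) already provide''). That is the genuine gap: in the paper's proof, essentially all of the work lies in those three parts, and none of it appears in your attempt. Concretely: \eqref{thm:term_structure_a} requires viewing $X$ as a continuous operator $C_0(\bR_{>0})\to L^2(\Omega_{T^*})$ via the predictable-version embedding, re-running the Riesz-representation argument of \cref{lem:random_measures}~\eqref{lem:random_measures_b} to get a countably additive $L^2(\Omega_{T^*})$-valued vector measure, and then invoking the measurable-version result (\cref{lem:measurable}) for the c\`adl\`ag map $x\mapsto X(0,x]$ to make $(t,\omega)\mapsto X(0,T-t](t,\omega)$ jointly measurable and unique up to $dt\otimes\bP$-null sets --- this is a nontrivial point precisely because $X$ is only \emph{cylindrically} measure-valued. \eqref{thm:term_structure_b} requires the telescoping computation of $\log B^n_t$ via the evolution equation and the shift semigroup, rewriting it with the round-up operator $\lceil s\rceil_n$, and passing to the limit using countable additivity of the vector measures $F_s,G_s$ plus dominated convergence; the limit formula for $B_t$ is an assertion of the theorem, not an input. \eqref{thm:term_structure_c} requires computing $-\log Z(t,T)=X_0(0,T]+\int_0^t F_s(0,T-s]\,ds+\int_0^t G_s(0,T-s]\,dW_s$ and applying It\^o's formula to extract the drift condition. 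Without these, the proposal is not a proof of the stated theorem.

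For part~\eqref{thm:term_structure_e} itself, your route essentially coincides with the paper's: the paper applies the Ansel--Stricker lemma to conclude that the value process of any admissible strategy in finitely many discounted assets is a supermartingale under $\bP$, so that $\bP$ is a separating measure, and then cites \cite{cuchiero2016new} for NAFLVR together with its invariance under equivalent measure changes; your formulation via ``local martingale $\Rightarrow$ $\sigma$-martingale $\Rightarrow$ $\bP$ is an equivalent separating measure'' is the same argument, since the implication from equivalent $\sigma$-martingale measure to separating measure \emph{is} Ansel--Stricker. The countable-dense-maturity reduction you add is both unnecessary and, as sketched, unsound: separation is verified strategy by strategy, and each admissible strategy trades only finitely many assets, so no reduction to $\bQ\cap[0,T^*]$ is needed; moreover, your transfer step --- that $L^2(\Omega)$-continuity in maturity makes the cone of claims from rational maturities dense in the cone from all maturities --- is not justified, because NAFLVR concerns the norm closure in $L^\infty$ of the cone of claims dominated by terminal values of admissible strategies, and convergence of individual terminal values in $L^2(\Omega)$ gives neither $L^\infty$-control nor uniform admissibility along the approximating strategies.
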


\begin{proof}
\begin{enumerate}[(a)]
\item Thanks to the continuous linear inclusion \cite{tappe2010note}
\begin{equation}
C_\bF([0,T^*],L^2(\Omega))\to L^2(\Omega_{T^*}),
\end{equation}
which amounts to the choice of a predictable version, the process $X$ is a continuous linear operator
\begin{equation}
X : C_0(\bR_{>0})\to L^2(\Omega_{T^*}).
\end{equation}
Then, by the same argument as in \cref{lem:random_measures}~\eqref{lem:random_measures_b}, $X$ has a Riesz representation as a countably additive vector measure
\begin{equation}
X : \cB(\bR_{>0})\to L^2(\Omega_{T^*}).
\end{equation}
Thanks to the countable additivity, the function
\begin{equation}
\bR \ni x \mapsto X(0,x] \in L^2(\Omega_{T^*})
\end{equation}
is right-continuous with left limits.
By \cref{lem:measurable}, it has a measurable version
\begin{equation}
\bR\times\Omega_{T^*}\ni (x,t,\omega) \mapsto X(0,x](t,\omega) \in \bR.
\end{equation}
As for any $T\in[0,T^*]$, the function
\begin{equation}
\Omega_{T^*}\ni (t,\omega)\mapsto (T-t,t,\omega)\in\bR\times\Omega_{T^*}
\end{equation}
is measurable, one obtains the measurability of the function
\begin{equation}
\Omega_{T^*}\ni (t,\omega) \mapsto X(0,T-t](t,\omega) \in \bR.
\end{equation}
This function is uniquely determined up to $dt\otimes\bP$-null sets because, for any $\tilde X$ which originates from another choice of measurable version above,
\begin{equation}
\int_0^{T^*}\bE\Big[\big(X(0,T-t](t,\omega)-\tilde X(0,T-t](t,\omega)\big)^2\Big]dt=0.
\end{equation}
Thus, by exponentiation, one obtains that the bond price process $P(\cdot,T)$ has a predictable version, which is determined uniquely up to a $dt\otimes\bP$-null set.
A similar reasoning applies to the energy futures process $\tilde P(\cdot,T_1,T_2)$.
\item
For the purpose of the proof, let $B_t$ be defined for any $t \in [0,T^*]$ as
\begin{equation}
B_t = \exp\Big(X_0(0,t]+\int_0^t F_s(0,t-s] ds + \int_0^t G_s(0,t-s]dW_s\Big).
\end{equation}
These integrals are well-defined because for any $t \in [0,T^*]$, the integrands have predictable versions
\begin{align}
\Omega_{T^*} \ni (s,\omega)&\mapsto F(0,t-s](s,\omega) \in \bR,
\\
\Omega_{T^*} \ni (s,\omega)&\mapsto G(0,t-s](s,\omega) \in U,
\end{align}
which are uniquely determined up to $ds\otimes\bP$-null sets and square-integrable,
as can be seen similarly to \eqref{thm:term_structure_a}.
To define the roll-over bank account process, let $t \in [0,T^*]$, $n\in\bN$, $t_i=ti/n$, and
\begin{equation}
B^n_t
=
\prod_{i=1}^n P(t_{i-1},t_i)^{-1}
=
\exp \Big(\sum_{i=1}^n X_{t_{i-1}}(0,t_i-t_{i-1}]\Big).
\end{equation}
Then, by the evolution equation for $X$,
\begin{align}
\log B^n_t
&=
\sum_{i=1}^n S_{t_{i-1}}^* X_0(0,t_i-t_{i-1}]
+
\sum_{i=1}^n \int_0^{t_{i-1}} S_{t_{i-1}-s}^* F_s(0,t_i-t_{i-1}] ds
\\&\qquad+
\sum_{i=1}^n \int_0^{t_{i-1}} S_{t_{i-1}-s}^* G_s(0,t_i-t_{i-1}] dWs
\\&=
X_0(0,t_n]
+
\sum_{i=1}^n \int_0^{t_{i-1}} F_s(t_{i-1}-s,t_i-s] ds
\\&\qquad+
\sum_{i=1}^n \int_0^{t_{i-1}} G_s(t_{i-1}-s,t_i-s] dW_s.
\end{align}
In terms of the round-up-to-the-grid operator
\begin{equation}
\lceil s\rceil_n = \min\{t_i: i\in\bN,t_i\geq s\},
\qquad
s \in [0,t],
\end{equation}
this can be rewritten by purely algebraic manipulations as
\begin{align}
\log B^n_t
&=
X_0(0,t]
+
\int_0^t F_s(\lceil s\rceil_n-s,t-s] ds
+
\int_0^t G_s(\lceil s\rceil_n-s,t-s] dW_s.
\end{align}
For large $n$, $\lceil s\rceil_n-s$ tends to $0$ from above.
As $F$ and $G$ are countably additive vector measures on $\cB(\bR_{>0})$, it follows that
\begin{align}
\lim_{n\to\infty}\|F_s(\lceil s\rceil_n-s,t-s]-F_s(0,t-s]\|_{L^2(\Omega)}&=0,
\\
\lim_{n\to\infty}\|G_s(\lceil s\rceil_n-s,t-s]-G_s(0,t-s]\|_{L^2(\Omega,U)}&=0.
\end{align}
Moreover, one has the uniform bounds
\begin{align}
\|F_s(\lceil s\rceil_n-s,t-s]\|_{L^2(\Omega)}
&\leq
\|F\|_{L(C_0(\bR_{>0}),C_\bF([0,T^*],L^2(\Omega)))},
\\
\|G_s(\lceil s\rceil_n-s,t-s]\|_{L^2(\Omega,U)}
&\leq
\|G\|_{L(C_0(\bR_{>0}),C_\bF([0,T^*],L^2(\Omega,U)))}.
\end{align}
Thus, by the dominated convergence theorem,
\begin{align}
\lim_{n\to\infty} \int_0^t \|F_s(\lceil s\rceil_n-s,t-s]-F_s(0,t-s]\|_{L^2(\Omega)}^2 ds =0,
\\
\lim_{n\to\infty} \int_0^t \|G_s(\lceil s\rceil_n-s,t-s]-G_s(0,t-s]\|_{L^2(\Omega,U)}^2 ds =0.
\end{align}
This ensures convergence of the integrals in the above formula for $\log B^n_t$.
Thus, we have shown that $B^n_t$ converges to $B_t$ as $n$ tends to infinity.

\item This follows along the lines of \cite[p.~61ff]{Filipovic2001}.
Fix $0\leq t\leq T\leq T^*$.
By the evolution equation for $X$, one has
\begin{align}
-\log P(t,T)
&=
X_t(0,T-t]
\\&=
S_t X_0(0,T-t]
+
\int_0^t S_{t-s}F_s(0,T-t] ds
+
\int_0^t S_{t-s}G_s(0,T-t] dW_s
\\&=
X_0(t,T]
+
\int_0^t F_s(t-s,T-s] ds
+
\int_0^t G_s(t-s,T-s] dW_s.
\end{align}
By the integral formula for $B_t$, the discounted bond price $Z(t,T)=P(t,T)/B_t$ satisfies
\begin{align}
-\log Z(t,T)
&=
X_0(0,T]
+
\int_0^t F_s(0,T-s] ds
+
\int_0^t G_s(0,T-s] dW_s.
\end{align}
This is a semimartingale, and It\^o's formula gives
\begin{multline}
Z(t,T)
=
Z(0,T)
-
\int_0^t Z(s,T)\Big(F_s(0,T-s]-\frac12 \|G_s(0,T-s]\|_U^2\Big) ds
\\-
\int_0^t Z(s,T) G_s(0,T-s] dW_s.
\end{multline}
Thus, $Z(\cdot,T)$ is a local martingale if and only if the first drift condition in statement \eqref{thm:term_structure_c} is satisfied.
A similar but simpler argument establishes the second drift condition.
\item Under the probability measure $\bP$, the value process of any admissible trading strategy in finitely many discounted bonds and discounted energy futures is a supermartingale, thanks to the Ansel--Stricker lemma.
Therefore, $\bP$ is a separating measure for the large financial market with discounted bonds and discounted energy futures as primary traded assets.
Consequently, no asymptotic free lunch with vanishing risk holds under $\bP$, and also under any measure equivalent to it \cite{cuchiero2016new}.
\end{enumerate}
\end{proof}

\begin{remark}[Bank account process]
The bank account process in \cref{thm:term_structure}~\eqref{thm:term_structure_b} coincides with the usual one for `regular' Heath--Jarrow--Morton models.
Here, regularity refers to requirement that the forward rate measures $X_t$ are non-cylindrical and have continuous Lebesgue densities $f_t$, which are subject to some additional regularity and integrability conditions \cite[Conditions H1--H3 and C1--C4]{Filipovic2001}.
Under these conditions, the bank account process may equivalently be defined by the well-known formula $B_t=\exp(\int_0^t r_s ds)$, where $r_s=f_s(0)$ is the short rate.
In particular, the bank account process is a semimartingale.
Moreover, by no-arbitrage considerations, the discounted bond prices, and consequently also the undiscounted ones, are semimartingales.
In contrast, beyond the regular case, the bank account process and the undiscounted bond price processes may not be semimartingales.
For instance, this happens if $X_t=S_t^*X_0$ is deterministic and $X_0$ has infinite variation.
\end{remark}

\begin{example}[Cylindrical term structure models]
\label{ex:term_structure}
It is easy to specify coefficients $F,\tilde F$ and $G,\tilde G$ which satisfy the assumptions and drift conditions of \cref{thm:cylindrical_stochastic_evolution,thm:term_structure}, as demonstrated in the following simple example.
Let $e\in L^\infty(\bR_{>0})^n$,
and let $g,\tilde g:\bR^n\to\cN_2(U,\cM(\bR_{>0}))$ be bounded Lipschitz functions,
where $\cN_2$ denotes 2-nuclear operators.
Then, for any $X \in L(C_0(\bR_{>0}),C_\bF([0,T],L^2(\Omega)))$, one has $X_t(e) \in L^2(\Omega,\bR^n)$ via the Riesz isomorphism in \cref{lem:random_measures}~\eqref{lem:random_measures_b}, and $g(X_t(e)) \in L^\infty(\Omega,\cN_2(U,\cM(\bR_{>0})))$ thanks to the boundedness of $g$.
Thus, one obtains Lipschitz functions
\begin{equation}
G,\tilde G:L(C_0(\bR_{>0}),C_\bF([0,T],L^2(\Omega)))\to L(C_0(\bR_{>0}),C_\bF([0,T],L^2(\Omega,U)))
\end{equation}
by identifying $U$ with $U^*$ and setting
\begin{equation}
G(X)(f)(t) = g(X_t(e))(f),
\qquad
\tilde G(X)(f)(t) = \tilde g(X_t(e))(f).
\end{equation}
Moreover, one obtains Lipschitz functions
\begin{equation}
F,\tilde F:L(C_0(\bR_{>0}),C_\bF([0,T],L^2(\Omega)))\to L(C_0(\bR_{>0}),C_\bF([0,T],L^2(\Omega))),
\end{equation}
which satisfy the drift conditions of \cref{thm:term_structure}, by the following definition, where the product of a measure $\mu\in\cM(\bR_{>0})$ with its distribution function $\int\mu\in \cL^\infty(\bR_{>0})$ is denoted by $\mu\int\mu\in\cM(\bR_{>0})$, and where $\Tr$ denotes the trace on $U$:
\begin{equation}
F(X)(f)(t) = \frac12 \Tr\big(g(X_t(e)){\textstyle\int} g(X_t(e))\big)(f),
\qquad
\tilde F(X) = 0.
\end{equation}
Alternative specifications are possible.
For instance, the coefficients could be time-dependent and could depend simultaneously on $X$ and $\tilde X$.
Note that $G$, as specified here, is non-cylindrically measure-valued.
We have not been able to construct $F$ subject to the drift condition when $G$ is merely cylindrically measure-valued.
Nevertheless, even for non-cylindrical $G$, a cylindrical setting is needed because the solution $X$ will be merely cylindrical, as noted in \cref{rem:measure_valued_integration}.
\end{example}

\begin{remark}[Measure-valued processes via martingale problems]
\label{rem:martingale_problems}
A well-trodden path for constructing measure-valued processes is via martingale problems.
Recently, this approach has been used to define models for discontinuous term structures \cite{cuchiero2022measure}, similarly to the ones defined here via cylindrical stochastic analysis.
The two approaches are closely related, as described next.
Let $X$ be a mild solution of the stochastic evolution equation in \cref{cor:cylindrically_evolution} with Markovian coefficients $F(X)_s=F(X_s)$ and $G(X)_s=G(X_s)$, i.e.,
\begin{equation}
X_t = S_t^*X_0 + \int_0^t S_{t-s}^*F(X_s) ds + \int_0^t S_{t-s}^*G(X_s) dW_s.
\end{equation}
Then, thanks to the (square) integrability of the coefficients,
$X$ is also a weak solution of the stochastic evolution equation \cite[Theorem~6.5]{daprato2014stochastic}, i.e., letting $A$ denote the generator of the shift semigroup $S$, one has for all $e \in D(A)$ that
\begin{equation}
X_t(e) = X_0(e) + \int_0^t X_s(Ae) ds + \int_0^t F(X_s)(e) ds + \int_0^t G(X_s)(e) dW_s.
\end{equation}
Thus, for any cylindrical functional $f(x)=\varphi(x(e_1),\dots,x(e_n))$, where $\varphi \in C_b^\infty(\bR^n,\bR)$, $n\in\bN$, and $e_1,\dots,e_n \in D(A)$, the process
\begin{multline}
f(X_t) - f(X_0) - \int_0^t Af'(X_s)X_s ds - \int_0^t f'(X_s)F(X_s) ds
\\
- \frac12 \int_0^t f''(X_s)(G(X_s),G(X_s)) ds
\end{multline}
is a local martingale, where $Af'$ is interpreted in the following sense:
\begin{equation}
Af'(x)y=g'(x(e_1),\dots,x(e_n))(y(Ae_1),\dots,y(Ae_n)).
\end{equation}
Assuming that $F$ and $G$ map non-cylindrical random measures to non-cylindrical random measures, as for instance in \cref{ex:term_structure}, it follows that $X$ solves the martingale problem with extended generator
\begin{align}
Lf(x) = Af'(x)x+f'(x)F(x)+\frac12 f''(x)(G(x),G(x)),
\qquad
x \in \cM(\bR_{>0}).
\end{align}
Under suitable conditions, existence and uniqueness of solutions to such martingale problems can be shown directly by Markovian methods.
Notably, this allows one to construct non-cylindrical processes with values in the cone of non-negative measures.
Of course, coefficients which lead to genuinely cylindrical processes, as for example in \cref{rem:measure_valued_integration}, are excluded.
Solutions of the martingale problem are, in great generality, again weak and mild solutions as above, for some Brownian motion reconstructed from the solution on an extended probability space \cite[Theorem~8.2]{daprato2014stochastic}.
\end{remark}

\appendix

\section{Cylindrical vector measures}
\label{sec:vector_measure}

This section explains cylindrification from the perspective of vector measures. 
Specifically, we show that \eqref{sec:vector_measure_a} cylindrical integrals in the sense of \cref{thm:cylindrical} are integrals with respect to cylindrical vector measures and conversely that \eqref{sec:vector_measure_b} cylindrical vector measures give rise to a cylindrical integral similar to \cref{thm:cylindrical}.
Moreover, \eqref{sec:vector_measure_c} cylindrical vector measures can be identified with operator-valued vector measures.
These relations are clearest in the case $I=\cL^\infty(S,F)$, which shall be assumed here.
\begin{enumerate}[(a)]
\item\label{sec:vector_measure_a}
The space $\cL^\infty(S,L(E,F))$ of non-cylindrical integrands is continuously included in the space $L(E,\cL^\infty(S,F))$ of cylindrical integrands.
Therefore, the cylindrical integral restricts to a continuous linear operator $\cL^\infty(S,L(E,F))\to L(E,H)$ or, equivalently, a continuous linear operator $E\to L(\cL^\infty(S,L(E,F)),H)$.
In other words, for any $e \in E$, the function $e\otimes\mu:\cA\to E\hat\otimes G$ belongs to the space $ba$ of finitely additive functions $\cA\to E \hat\otimes G$ with finite semivariation $\|\cdot\|_{L(E,F),E\hat\otimes G,H}$, where $\hat\otimes$ is the projective tensor product.
Consequently, $\mu$ (or rather the linear map $e\mapsto e\otimes \mu$) belongs to $L(E,ba)$.
In this sense, $\mu$ is a cylindrical vector measure, and the cylindrical integral is a continuous extension of its elementary integral.
\item\label{sec:vector_measure_b} Of course, not all elements of $L(E,ba)$ are of the simple form $e\mapsto e\otimes\mu$.
To treat the general case, let $\mu \in L(E,ba)$, where $ba$ now denotes the set of finitely additive functions $\cA\to G$ with finite semivariation $\|\cdot\|_{F,G,H}$. Under the mild assumption that for all $e \in E$, the elementary integral with respect to $\mu(e)$ extends continuously to $\cL^\infty(S,F)$, one obtains a continuous integral $\cL^\infty(S,F)\to L(E,H)$. Thus, starting from an $E$-cylindrical vector measure, one obtains an integral with values in the space $L(E,H)$, similarly to \cref{thm:cylindrical}.
\item\label{sec:vector_measure_c} Cylindrical vector measures $\mu:E\to (\cA\to G)$ in the sense of \eqref{sec:vector_measure_b} can be identified with operator-valued vector measures $\nu:\cA\to(E\to G)$ by a flip of coordinates: $\nu(A)(e)=\mu(e)(A)$.
This identification is a Banach isometry.
To see this, recall that the semivariation norm of a vector measure is the operator norm of its elementary integral and use the fact that $L(E,L(F,G))\cong L(F,L(E,G))$ for any Banach spaces $E$, $F$, and $G$.
Similar results for countably additive vector measures have been obtained in \cite[Theorem~4.2]{assefa2022cylindrical} building on \cite{graves1977theory}.
\end{enumerate}

\section{Measurable and predictable versions}
\label{sec:measurable_version}

\begin{lemma}[Measurable version]
\label{lem:measurable}
Let $T\in\bR_+$, let $(S,\Sigma,\mu)$ be a measure space, let $E$ be a Banach space,
and let $D([0,T],L^2(S,E))$ be the Banach space of c\`adl\`ag functions $[0,T]\to L^2(S,E)$ with the supremum norm.
Then, any process $X \in D([0,T],L^2(S,E))$ has a strongly measurable version $Y:[0,T]\times S\to E$, which is uniquely defined up to $dt\otimes\mu$-null sets, thereby giving rise to a linear embedding with operator norm bounded by $\sqrt{T}$:
\begin{equation}
D([0,T],L^2(S,E)) \ni X \mapsto Y \in L^2([0,T]\times S,E).
\end{equation}
\end{lemma}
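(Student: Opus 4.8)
The plan is to construct the measurable version $Y$ by approximating the càdlàg path $X$ with piecewise-constant interpolations on finer and finer time grids, and then to pass to an almost-everywhere limit. First I would fix a sequence of dyadic partitions of $[0,T]$ and define, for each $n$, the step process $Y^n(t,s) = X_{\lceil t\rceil_n}(s)$, where $\lceil t\rceil_n$ rounds $t$ up to the next grid point (and $Y^n(T,\cdot)=X_T$). Each $Y^n$ is jointly strongly measurable in $(t,s)$, being constant in $t$ on each grid interval with $L^2(S,E)$-valued, hence strongly measurable, values. The key point is that right-continuity of $X$ in $L^2(S,E)$ gives $\|X_{\lceil t\rceil_n}-X_t\|_{L^2(S,E)}\to 0$ pointwise in $t$, and since $X$ is càdlàg it is bounded, so dominated convergence yields
\begin{equation}
\int_0^T \big\|X_{\lceil t\rceil_n}-X_t\big\|_{L^2(S,E)}^2\,dt \longrightarrow 0.
\end{equation}
By Fubini this says $Y^n\to$ (some candidate) in $L^2([0,T]\times S,E)$; extracting a further subsequence gives $dt\otimes\mu$-almost-everywhere convergence to a strongly measurable limit $Y$, which by construction satisfies $Y(t,\cdot)=X_t$ in $L^2(S,E)$ for $dt$-almost every $t$. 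This is the sought measurable version.

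For the norm bound, I would note that for the version just built,
\begin{equation}
\|Y\|_{L^2([0,T]\times S,E)}^2 = \int_0^T \|X_t\|_{L^2(S,E)}^2\,dt \leq T\,\sup_{t\in[0,T]}\|X_t\|_{L^2(S,E)}^2 = T\,\|X\|_{D([0,T],L^2(S,E))}^2,
\end{equation}
so that the embedding $X\mapsto Y$ is linear with operator norm at most $\sqrt{T}$. Linearity is immediate from the construction, since the interpolation $X\mapsto Y^n$ is linear and limits respect linear combinations.

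For uniqueness up to $dt\otimes\mu$-null sets, suppose $Y$ and $\tilde Y$ are two strongly measurable versions, meaning $Y(t,\cdot)=X_t=\tilde Y(t,\cdot)$ in $L^2(S,E)$ for $dt$-almost every $t$. Then $\int_S\|Y(t,s)-\tilde Y(t,s)\|_E^2\,\mu(ds)=0$ for almost every $t$, and integrating in $t$ and applying Fubini gives $\|Y-\tilde Y\|_{L^2([0,T]\times S,E)}=0$, hence $Y=\tilde Y$ $dt\otimes\mu$-almost everywhere.

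I expect the main obstacle to be the joint strong measurability of the limit $Y$ rather than the estimates, which are routine. Strong measurability of each step process $Y^n$ requires knowing that an $L^2(S,E)$-valued function admits a genuinely $(t,s)$-jointly strongly measurable representative on each grid cell; this is where I would invoke the standard fact that a strongly measurable $L^2(S,E)$-valued map has a jointly measurable kernel (Fubini for Bochner spaces), and that this property is preserved under the $dt\otimes\mu$-a.e. limit along the extracted subsequence. Separability issues, if $E$ is non-separable, would be handled by observing that the essential range of the càdlàg path $X$ lies in a separable subspace of $L^2(S,E)$, reducing to the separable case.
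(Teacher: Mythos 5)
Your skeleton---piecewise-constant approximation in time followed by a limit---is the same as the paper's, and your norm bound, linearity, and uniqueness arguments are correct. But there is one genuine gap, and it is not where you predicted (joint measurability, which you handle adequately): it is the mode of convergence. With fixed dyadic grids you have no quantitative control over $\|X_{\lceil t\rceil_n}-X_t\|_{L^2(S,E)}$, so you can only pass to the limit in $L^2([0,T]\times S,E)$ and extract a $dt\otimes\mu$-a.e.\ convergent subsequence. The resulting $Y$ then satisfies $Y(t,\cdot)=X_t$ in $L^2(S,E)$ only for $dt$-almost every $t$, since the exceptional product-null set may have non-null $s$-slices over a $dt$-null set of times. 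This is strictly weaker than what the lemma asserts, namely a version (modification) of the process: $Y(t,\cdot)=X_t$ $\mu$-a.e.\ for \emph{every} $t\in[0,T]$. The stronger property matters downstream: in the proof of \cref{thm:term_structure}, the measurable version is evaluated along the moving argument $x=T-t$, a substitution that concentrates on a set which is null for the product measure, so an exceptional $dt$-null set of bad slices cannot be tolerated there.

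The paper closes exactly this gap by choosing the grids adapted to $X$: since $X$ is c\`adl\`ag, for each $n$ there is a finite set $A_n\subset[0,T]$ containing zero with $\sup_{t\in[0,T]}\|X_t-X_{\lfloor t\rfloor_n}\|_{L^2(S,E)}\le 2^{-n}$. The summable errors permit Chebyshev plus Borel--Cantelli at every fixed $t$ simultaneously, yielding $X_{\lfloor t\rfloor_n}(s)\to X_t(s)$ for $\mu$-a.e.\ $s$, for every $t$; defining $Y$ as this pointwise limit off the measurable null set $N=\bigcup_{t\in[0,T]}\{t\}\times N_t$ (and zero on $N$) gives a strongly measurable true modification, after which the $\sqrt{T}$ bound and uniqueness follow as in your argument. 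Your proof is repaired by precisely this substitution: replace the dyadic grids by such adapted grids (rounding up versus down is immaterial once the approximation is uniform) and replace the $L^2$-plus-subsequence step by the pointwise Borel--Cantelli argument. Your closing remarks on representatives of the countably many grid values and on reduction to separable range are correct and take care of the remaining measure-theoretic fine print.
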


\begin{proof}
We follow the lines of \cite[Proposition~3.2]{daprato2014stochastic} with continuity replaced by right-continuity with left limits.
Let $X \in D([0,T],L^2(S,E))$.
For any $n\in\bN$, there exists a finite set $A_n\subset[0,T]$, which contains zero and satisfies
\begin{equation}
\sup_{t\in[0,T]} \|X_t-X_{\lfloor t\rfloor_n}\|_{L^2(\Omega,E)}\leq 2^{-n},
\end{equation}
where $\lfloor t\rfloor_n=\max (A_n\cap [0,t])$ denotes rounding down to the grid $A_n$.
For any $t\in[0,T]$, let $N_t$ be the set of all $s\in S$ such that $X_{\lfloor t\rfloor_n}(s)$ does not converge to $X_t(s)$.
By Borell--Cantelli, $N_t$ is a null set.
The set $N=\bigcup_{t\in[0,T]}\{t\}\times N_t$ is measurable and a $dt\otimes\mu$-null set.
Define $Y:[0,T]\times\Omega\to E$ as $\lim_{n\to\infty}X_t(\lfloor s\rfloor_n)$ for $(t,s)\notin N$ and as zero otherwise.
Then, $Y$ is a version of $X$ and is strongly measurable as a pointwise limit of strongly measurable functions.
The $L^2$-norm of $Y$ is bounded by $\sqrt{T}$ times the supremum norm of $X$.
In particular, for any other strongly measurable version $\tilde Y$, the $L^2$ norm of $Y-\tilde Y$ vanishes, which implies that $Y$ is determined uniquely up to $dt\otimes\bP$-null sets.
\end{proof}

\begin{lemma}[Predictable version]
\label{lem:predictable}
Let $T\in\bR_+$, let $(\Omega,\cF,\bF=(\cF_t)_{t\in[0,T]},\bP)$ be a filtered probability space satisfying the usual conditions, let $E$ be a Banach space, let $C_\bF([0,T],L^2(\Omega,E))$ be the Banach space of adapted processes in $C([0,T],L^2(\Omega,E))$, and let $\Omega_T=[0,T]\times\Omega$ be endowed with the measure $dt\otimes\bP$ restricted to the predictable sigma algebra.
Then, any process $X \in C_\bF([0,T],L^2(\Omega,E))$ has a predictable version $Y:\Omega_T\to E$, which is uniquely defined up to $dt\otimes\bP$-null sets, thereby giving rise to a continuous linear embedding of operator norm bounded by $\sqrt{T}$:
\begin{equation}
C_\bF([0,T],L^2(\Omega)) \ni X \mapsto Y \in L^2(\Omega_T).
\end{equation}
\end{lemma}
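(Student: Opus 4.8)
The plan is to mirror the proof of \cref{lem:measurable}, the one essential new ingredient being that the approximating step processes must be taken \emph{left}-continuous: a left-continuous adapted step process is predictable, whereas a right-continuous one (as used in \cref{lem:measurable}) generally is not. Since $[0,T]$ is compact and $X:[0,T]\to L^2(\Omega,E)$ is continuous, hence uniformly continuous, for every $n\in\bN$ one may choose a finite grid $0=s_0<\dots<s_{k_n}=T$ with
\begin{equation*}
\sup_{t\in[0,T]}\big\|X_t-X^n_t\big\|_{L^2(\Omega,E)}\leq 2^{-n},
\qquad
X^n_t:=X_0\1_{\{0\}}(t)+\sum_{i=1}^{k_n}X_{s_{i-1}}\1_{(s_{i-1},s_i]}(t).
\end{equation*}

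First I would check that each $X^n$ is predictable. By adaptedness of $X$, the coefficient $X_{s_{i-1}}\in L^2(\Omega,E)$ is strongly $\cF_{s_{i-1}}$-measurable, while $\{0\}\times\Omega$ and the rectangles $(s_{i-1},s_i]\times\Omega$ are predictable; writing $X_{s_{i-1}}$ as a strong limit of simple $\cF_{s_{i-1}}$-measurable functions exhibits $X_{s_{i-1}}\1_{(s_{i-1},s_i]}$ as a strong limit of indicators of predictable rectangles, so $X^n$ is $\cP$-measurable.

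Next I would pass to the limit. Since $\sup_t\|X^n_t-X_t\|_{L^2(\Omega,E)}\leq 2^{-n}$, the $X^n$ form a Cauchy sequence in $L^2([0,T]\times\Omega;E)$ with $\|X^n-X\|_{L^2([0,T]\times\Omega;E)}\leq\sqrt T\,2^{-n}$. As each $X^n$ is predictable and the predictable functions constitute a closed subspace of $L^2([0,T]\times\Omega;E)$, the limit admits a predictable representative $Y$, which equals $X$ as a class in $L^2([0,T]\times\Omega;E)$; hence $Y_t=X_t$ in $L^2(\Omega,E)$ for $dt$-a.e.\ $t$, i.e.\ $Y$ is a version of $X$. (Alternatively, summability of $2^{-n}$ together with Chebyshev and Borel--Cantelli upgrades this to $Y_t=X_t$ $\bP$-a.s.\ for \emph{every} $t$, exactly as in \cref{lem:measurable}.) The norm estimate is then immediate, $\|Y\|_{L^2(\Omega_T,E)}^2=\int_0^T\|X_t\|_{L^2(\Omega,E)}^2\,dt\leq T\,\|X\|_{C_\bF([0,T],L^2(\Omega,E))}^2$, so $X\mapsto Y$ has operator norm at most $\sqrt T$; it is linear, and injective because $Y=0$ forces $X_t=0$ for a.e.\ $t$ and hence, by continuity, for all $t$.

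Finally, uniqueness up to $dt\otimes\bP$-null sets follows because any two predictable versions $Y,\tilde Y$ each agree with $X_t$ for a.e.\ $t$, so that $\int_0^T\|Y_t-\tilde Y_t\|_{L^2(\Omega,E)}^2\,dt=0$ and thus $Y=\tilde Y$ off a $dt\otimes\bP$-null set by Fubini. I expect the only genuinely new difficulty, relative to \cref{lem:measurable}, to lie in the predictability of the approximants $X^n$ and of their limit $Y$: this is exactly where adaptedness of $X$ and the choice of left- rather than right-continuous step functions enter, whereas the convergence, version, and norm arguments transfer essentially verbatim.
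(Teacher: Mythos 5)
Your proof is correct, but note that the paper does not actually spell out an argument for this lemma: its proof consists of citations to \cite[Proposition~3.7]{daprato2014stochastic} and \cite[Lemma~2.4 and Proposition~2.5]{tappe2010note}. What you have written is essentially the standard argument behind those citations, and it is the natural adaptation of the paper's own proof of \cref{lem:measurable}: you correctly isolated the one substantive change, namely that the approximating adapted step processes must be left-continuous (value $X_{s_{i-1}}$ on $(s_{i-1},s_i]$), so that each approximant is $\cP$-measurable, whereas the right-continuous rounding used in \cref{lem:measurable} need not be predictable. Your route buys a self-contained proof where the paper settles for a pointer to the literature; there is no genuine divergence in method.

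Two points of hygiene. First, your inequality $\|X^n-X\|_{L^2([0,T]\times\Omega;E)}\leq\sqrt{T}\,2^{-n}$ presupposes that $X$ is jointly measurable, which is the very point at issue; the clean order of operations is to deduce from $\sup_{t}\|X^n_t-X^m_t\|_{L^2(\Omega,E)}\leq 2^{-n}+2^{-m}$ that $(X^n)$ is Cauchy in the complete space $L^2\big((\Omega_T,\cP,dt\otimes\bP),E\big)$, take the limit $Y$ there, and only afterwards identify the slices $Y_t$ with $X_t$ for almost every $t$ via $\int_0^T\|Y_t-X^n_t\|_{L^2(\Omega,E)}^2\,dt\to 0$ and $\sup_t\|X^n_t-X_t\|_{L^2(\Omega,E)}\leq 2^{-n}$. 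Second, if ``version'' is read as in \cref{lem:measurable}, i.e.\ $Y_t=X_t$ $\bP$-a.s.\ for \emph{every} $t\in[0,T]$, then your parenthetical Chebyshev/Borel--Cantelli upgrade is not optional but required; it does go through, since for each fixed $t$ one has $\|X^n_t-X_t\|_{L^2(\Omega,E)}\leq 2^{-n}$, and the set on which the sequence of predictable functions $(X^n)$ converges pointwise is itself predictable, so defining $Y$ as the limit on that set and $0$ elsewhere keeps $Y$ genuinely $\cP$-measurable. With these glosses, your uniqueness argument via Fubini and the operator-norm bound $\sqrt{T}$ are exactly right.
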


\begin{proof}
See \cite[Proposition~3.7]{daprato2014stochastic} or \cite[Lemma~2.4]{tappe2010note} for the existence of a predictable version and \cite[Lemma~2.4 and Proposition~2.5]{tappe2010note} for the continuous linear embedding.
\end{proof}

\printbibliography

\end{document}